\newcommand{\Q}{ \mathbb{Q} }
\newcommand{\Z}{ \mathbb{Z} }
\newcommand{\N}{ \mathbb{N} }
\newcommand{\cl}[1]{ \overline{#1} }
\theoremstyle{plain}
	\newtheorem{thm}{Theorem}
	\newtheorem{notn}[thm]{Notation}
		\numberwithin{thm}{section}
	\newtheorem{lemma}[thm]{Lemma}
	\newtheorem{prop}[thm]{Proposition}
	\newtheorem{cor}[thm]{Corollary}
	\newtheorem*{thm*}{Theorem}
	\newtheorem*{lemma*}{Lemma}
	\newtheorem*{prop*}{Proposition}
	\newtheorem*{cor*}{Corollary}
	\newtheorem*{conj*}{Conjecture}
\theoremstyle{definition}
	\newtheorem{example}[thm]{Example}
	\newtheorem*{example*}{Example}
	\newtheorem{defn}[thm]{Definition}
	\newtheorem{remark}[thm]{Remark}
\begin{document}


\title{Rational dynamical systems, $S$-units, and $D$-finite power series}

\author{Jason P. Bell}
\address{Department of Pure Mathematics\\
University of Waterloo\\
Waterloo, ON N2L 3G1\\
Canada}
\email{jpbell@uwaterloo.ca}

\author{Shaoshi Chen}
\address{KLMM \\ Academy of Mathematics and Systems Science \\ Chinese Academy of Sciences \\
Beijing, 100190, China}
\email{schen@amss.ac.cn}

\author{Ehsaan Hossain}
\address{Department of Pure Mathematics\\
University of Waterloo\\
Waterloo, ON N2L 3G1\\
Canada}
\email{ehossain@uwaterloo.ca}

\begin{abstract}
Let $K$ be an algebraically closed field of characteristic zero and let $G$ be
a finitely generated subgroup of the multiplicative group of $K$.  We consider $K$-valued sequences of the form
$a_n:=f(\varphi^n(x_0))$, where $\varphi\colon X\dasharrow X$ and $f\colon X\dasharrow\mathbb{P}^1$ are rational maps
defined over $K$ and $x_0\in X$ is a point whose forward orbit avoids the indeterminacy loci
of $\varphi$ and $f$.  Many classical sequences from number theory and algebraic combinatorics fall under this dynamical
framework, and we show that the set of $n$ for which $a_n\in G$ is a finite union of arithmetic progressions along with
a set of Banach density zero.  In addition, we show that if $a_n\in G$ for every $n$ and $X$ is irreducible and the $\varphi$ orbit
of $x$ is Zariski dense in $X$ then there are a multiplicative torus $\mathbb{G}_m^d$ and maps $\Psi:\mathbb{G}_m^d \to \mathbb{G}_m^d$
and $g:\mathbb{G}_m^d \to \mathbb{G}_m$ such that $a_n = g\circ \Psi^n(y)$ for some $y\in \mathbb{G}_m^d$. We then obtain results about the coefficients of 
$D$-finite power series using these facts.
\end{abstract}

\maketitle

\tableofcontents

\section{Introduction}

A \textit{rational dynamical system} is a pair $(X,\varphi)$, where $X$ is a quasiprojective variety defined over a field $K$, and $\varphi:X\dashrightarrow X$ is a rational map. The forward $\varphi$-orbit of a point $x_0\in X$ is given by
\[ O_\varphi(x_0):=\{x_0,\varphi(x_0),\varphi^2(x_0),\ldots\} \]
 as long as this orbit is defined (\textit{i.e.}, $x_0$ is outside the indeterminacy locus of $\varphi^n$ for every $n\geq 0$).

In \cite{BGS} and \cite{BFS}, the authors develop a broad dynamical framework giving rise to many
classical sequences from number theory and algebraic combinatorics,
by considering \emph{dynamical sequences}, which are sequences of the form $ f\circ \varphi^n(x_0)$, where $(X,\varphi)$ is a rational
dynamical system, $f:X\dashrightarrow \mathbb{P}^1$ is a rational map, and $x_0\in X$.
In particular, the class of dynamical sequences includes all sequences whose generating functions are $D$-finite, i.e., those satisfying homogeneous linear differential
equations with rational function coefficients.  This is an important class of power series since it
appears ubiquitously in algebra, combinatorics, and number theory. In particular, this class contains:
\begin{itemize}
\item all hypergeometric series (see, for example, \cite{G09, WZ});
\item generating functions for many classes of lattice walks \cite{DHRS18};
\item diagonals of rational functions \cite{Lipshitz88};
\item power series expansions of algebraic functions \cite[Chapter 6]{Stan};
\item generating series for the cogrowth of many finitely presented groups \cite{GP};
\item many classical combinatorial sequences (see Stanley \cite[Chapter 6]{Stan} and Mishna \cite[Chapter 5]{Mishna20} for more examples).
\end{itemize}
  The $D$-finiteness of generating functions reflects the complexity of combinatorial classes~\cite{Pak18}.
Since this class is closed under addition, multiplication, and the process of taking diagonals, it has become a useful data structure
for the manipulation of special functions in symbolic computation~\cite{Salvy19}. The main goal of this paper is to
study $D$-finite power series in the framework of rational dynamical systems.

The study of power series with coefficients in a finitely generated subgroup $G$ of the multiplicative group of
a field enjoys a long history, going back at least to the early 1920s, with the pioneering work of P\'olya \cite{Pol},
who characterized rational functions whose Taylor expansions at the origin have coefficients lying in a finitely generated multiplicative subgroup of $\Z$.
P\'olya's results were later extended to $D$-finite power series by B\'ezivin \cite{Bez}.  From this point of view,
it is then natural to consider when the dynamical sequences we study take values in a finitely generated multiplicative group.
This question and related questions have already been considered in the case of self-maps of $\mathbb{P}^1$ in \cite{BOSS, BOSS2}.

We show that, with the above notation, the set of $n$ for which $f\circ \varphi^n(x_0)\in G$ is well-behaved.  To make this precise, we
recall that the \textit{Banach density} of a subset $S\subseteq \N_0$ is
\[ \delta(S):=\limsup_{|I|\rightarrow\infty}\frac{|S\cap I|}{|I|}, \]
where $I$ ranges over all intervals.  Then our main result is the following.

\begin{thm}
\label{main theorem}
Let $X$ be a quasiprojective variety over a field $K$ of characteristic zero, let $\varphi:X\dashrightarrow X$ be a rational map, let $f:X\dashrightarrow K$ be a rational function, and let $G\leq K^*$ be a finitely generated group. If $x_0\in X$ is a point with well-defined forward $\varphi$-orbit that also avoids the indeterminacy locus of $f$, then the set
\[ \left\{n\in \N_0 : f(\varphi^n(x_0)) \in G\right\} \]
is a finite union of arithmetic progressions along with a set of Banach density zero.
\end{thm}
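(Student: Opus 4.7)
The proof strategy combines dynamical Mordell-Lang techniques with the finiteness theorems for $S$-unit equations. As a first step, standard reductions let me work in a convenient arithmetic setting: since the defining data of $X$, the maps $\varphi$ and $f$, a finite generating set of $G$, and the coordinates of $x_0$ involve only finitely many parameters of $K$, I may replace $K$ by a finitely generated extension of $\mathbb{Q}$, and a specialization argument in the style of Skolem-Mahler-Lech then lets me further assume $K$ is a number field. Replacing $X$ by the Zariski closure of $O_\varphi(x_0)$ and cycling through the irreducible components by passing to a preliminary arithmetic progression in $n$, I may assume $X$ is irreducible and the orbit of $x_0$ is Zariski dense in $X$.

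Next, I convert the $G$-condition into an algebraic-dynamical condition by forming an auxiliary rational self-map on $X\times\mathbb{G}_m$, namely
\[
\tilde\varphi(x,y) = \bigl(\varphi(x),\; y\cdot f(\varphi(x))/f(x)\bigr),
\]
whose orbit starting at $(x_0,1)$ evolves as $(\varphi^n(x_0),\, f(\varphi^n(x_0))/f(x_0))$. After enlarging $G$ to contain $f(x_0)$, the target set becomes $\{n\in\N_0 : \tilde\varphi^n(x_0,1)\in X\times G\}$. Let $W$ denote the Zariski closure of this orbit in $X\times\mathbb{G}_m$, and let $\pi_2\colon W\to\mathbb{G}_m$ be the projection. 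For each $g\in G$, the fiber $\pi_2^{-1}(g)$ is a (possibly empty) subvariety of $W$, so the problem splits into understanding the orbit hits against each such fiber.

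The heart of the argument lies in controlling the aggregate over all $g\in G$. For a single $g$, the set of $n$ with $\tilde\varphi^n(x_0,1)\in\pi_2^{-1}(g)$ is a finite union of arithmetic progressions together with a set of Banach density zero, by the density-zero version of the dynamical Mordell-Lang theorem for rational maps (available via the $p$-adic arc method of Bell-Ghioca-Tucker). The difficulty is that $G$ contributes infinitely many such fibers. Here the finite generation of $G$ enters via Laurent's theorem on intersections of subvarieties of algebraic tori with finitely generated subgroups: applied to the torus factor, it organizes $W\cap (X\times G)$ into a finite union of intersections $W\cap(X\times gH)$, where $gH$ ranges over finitely many cosets of subgroups $H\le G$. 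Those coset subvarieties that contribute at full dimension yield orbits contained in them along entire arithmetic progressions, giving the AP part of the decomposition; those that are proper in $W$ contribute a density-zero residue.

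The principal obstacle is bridging the abstract structural output of Laurent to a quantitative density statement about return times. Concretely, one must show that orbit hits on the thin (proper-dimension) coset subvarieties cannot accumulate outside of density zero. This is where $S$-unit finiteness plays its decisive role: an accumulation of orbit hits on a thin coset would force a non-degenerate multiplicative relation among values $f(\varphi^{n_i}(x_0))$ lying in $G$, producing an $S$-unit equation in $G$ with infinitely many solutions and contradicting the Evertse-Schlickewei-Schmidt theorem. Assembling the arithmetic progressions coming from the full-dimension coset subvarieties with the density-zero residuals from the thin ones yields the decomposition claimed by the theorem.
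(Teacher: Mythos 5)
Your proposal is built on two steps that do not hold up, plus a flawed preliminary reduction. First, the reduction to number fields by specialization is invalid: specialization can only enlarge the set $\{n\in\mathbb{N}_0 : f(\varphi^n(x_0))\in G\}$, since an element outside $G$ may well specialize into the specialized group, and the desired structure (finitely many arithmetic progressions plus a Banach density zero set) is not inherited by arbitrary subsets; so proving the statement after specialization says nothing about the original set. The paper instead works directly over a finitely generated extension of $\mathbb{Q}$, using Roquette's theorem on unit groups of finitely generated domains to keep the relevant multiplicative groups finitely generated (Proposition \ref{recurrence units}). Second, and more seriously, the structural linchpin of your argument is unfounded: $W$ is a subvariety of $X\times\mathbb{G}_m$, which is not an algebraic torus (nor semiabelian for general $X$), and $X\times G$ is not a finitely generated subgroup of a torus, so Laurent's theorem gives no decomposition of $W\cap(X\times G)$ into finitely many coset pieces $W\cap(X\times gH)$; applying Laurent ``to the torus factor'' $\mathbb{G}_m$ is vacuous, since proper subvarieties of $\mathbb{G}_m$ are finite. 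Without that decomposition, your split into ``full-dimension'' progressions and ``thin'' density-zero residue never gets off the ground.

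The $S$-unit step is likewise an assertion without a mechanism: the Evertse--Schlickewei--Schmidt theorem bounds nondegenerate solutions of a \emph{fixed} linear equation in a fixed finitely generated group, and you never exhibit such an equation. This is precisely where the paper does its work: positive Banach density is upgraded, via Furstenberg's form of Szemer\'edi's theorem, to a positive-density set of $n$ for which the $d+1$ values $f(\varphi^{n+ib}(x_0))$, $i=0,\dots,d$ with $d=\dim X$, lie in $G$ simultaneously; since $d+1$ rational functions on a $d$-dimensional irreducible variety are algebraically dependent, evaluating that dependence along these points produces a fixed $S$-unit equation, and nondegeneracy of some part of it forces the functions $f\circ\varphi^{ib}$ to be multiplicatively dependent modulo constants (Lemma \ref{dichotomy}). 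That relation makes $u_n=f(\varphi^n(x_0))$ satisfy a multiplicative quasilinear recurrence, and the Fatou-type Lemma \ref{quasilinear implies linear} together with Skolem--Mahler--Lech shows $\{n: u_n\in G\}$ is eventually periodic, which yields both the single progression (Proposition \ref{arithmetic progression}) and, after the noetherian reduction, the full decomposition. Finally, note your auxiliary map $\tilde\varphi$ divides by $f$ along the orbit and is therefore undefined whenever $a_n=0$; the paper must (and does) dispose of the zero set first, using the Bell--Ghioca--Tucker density theorem, before any multiplicative argument can start.
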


In the case when $G$ is the trivial group, Theorem \ref{main theorem} follows independently from the work of several authors \cite{BGT, Gig, Petsche}, and was originally conjectured by 
Denis \cite{Denis}, and is now considered a relaxed version of the so-called Dynamical Mordell-Lang Conjecture (see \cite{DML-book} for further background). 
There is also some overlap between our result and that of Ghioca and Nguyen \cite{GN}: in particular, when $G$ is an infinite cyclic group and $f$ is a linear map, 
one gets a special case of their result.  
The zero density sets in Theorem \ref{main theorem} are not necessarily finite. For a simple example, 
consider the map $\varphi:\mathbb{A}^1\rightarrow\mathbb{A}^1$ defined by $\varphi(x)=x+1$ and the rational function $f(x) =x$. 
Let $G=\langle 2\rangle$. Then for the initial point $x_0=1$ the set $\{n\in \N_0 : f(\varphi^n(x_0))\in G\} = \{0,1,3,7,15,\ldots\}$ is 
an infinite set of Banach density zero.

The case $N=\N_0$ can be easily achieved: let $T:=\mathbb{G}_m^d$ be a $d$-dimensional multiplicative torus.  Then an endomorphism $\varphi$ of $T$ is a map of the form
$$(x_1,\ldots ,x_d)\mapsto \left(c_1\prod_{j} x_j^{a_{1,j}},\ldots ,c_d\prod_j x_j^{a_{d,j}}\right).$$
Then if we begin with a point  $x_0=(\beta_1,\ldots ,\beta_d)$, then every point in the orbit of $x_0$ has coordinates in the multiplicative group $G$ generated by $$c_1,\ldots ,c_d,\beta_1,\ldots, \beta_d.$$  In particular, if $f:T\to \mathbb{P}^1$ is a map of the form $(x_1,\ldots ,x_d)\mapsto \kappa x_1^{p_1}\cdots x_d^{p_d}$ with $\kappa\in G$ then $f\circ \varphi^n(x_0)\in G$ for every $n\ge 0$.

In fact, we show that in characteristic zero any dynamical system $(X,\varphi)$ with $N=\N_0$ is ``controlled'' by one of this form, in the following sense.

\begin{thm} \label{main theorem 2}
Let $K$ be an algebraically closed field of characteristic zero and let $X$ be an irreducible quasiprojective variety with a dominant self-map $\varphi:X\to X$ and
let $f:  X \to \mathbb{P}^1$ be a dominant rational map, all defined over $K$.  Suppose that $x\in X$ has the following properties:
\begin{enumerate}
\item every point in the orbit of $x$ under $\varphi$ avoids the indeterminacy loci of $\varphi$ and $f$;
\item $O_{\varphi}(x)$ is Zariski dense;
\item there is a finitely generated multiplicative subgroup $G$ of $K^*$ such that $f\circ \varphi^n(x)\in G$ for every $n\in \mathbb{N}_0$.
\end{enumerate}
Then there exist a dominant rational map $\Theta:X\dashrightarrow \mathbb{G}_m^d$ for some nonnegative integer $d$, and a dominant endomorphism $\Phi:\mathbb{G}_m^d\rightarrow \mathbb{G}_m^d$ such that the following diagram commutes
\begin{center}
    \begin{tikzpicture}[node distance=1.7cm, auto]
        \node (X1) {$X$};
        \node (X2) [right of=X1] {$X$};
        \node (Ad1) [below of=X1] {$\mathbb{G}_m^d$};
        \node (Ad2) [below of=X2] {$\mathbb{G}_m^d.$};

        \draw[->,dashed] (X1) to node {$\varphi$} (X2);
        \draw[->,dashed] (X1) to node [swap] {$\Theta$} (Ad1);
        \draw[->,dashed] (X2) to node {$\Theta$} (Ad2);
        \draw[->] (Ad1) to node [swap] {$\Phi$} (Ad2);
    \end{tikzpicture}
\end{center}
Moreover, $O_{\varphi}(x_0)$ avoids the indeterminacy locus of $\Theta$ and $f=g\circ \Theta$, where $g:\mathbb{G}_m^d \to \mathbb{G}_m$ is a map of the form
\[ g(t_1,\ldots,t_d) = Ct_1^{i_1}\cdots t_d^{i_d}\]
for some $i_1,\ldots,i_d\in \Z$ and some $C\in G$.
\end{thm}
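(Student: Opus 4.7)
The plan is to apply the Mordell--Lang theorem for tori (in the form of Laurent's theorem) to the iterated ``jet'' map
\[
\Psi_N\colon X\dashrightarrow \mathbb{G}_m^{N+1},\qquad \Psi_N(y)=\bigl(f(y),\,f(\varphi(y)),\,\ldots,\,f(\varphi^N(y))\bigr),
\]
and then to pass to a stabilized limit in $N$ to read off the torus $\mathbb{G}_m^d$, the endomorphism $\Phi$, and the monomial $g$ from the resulting coset structure.

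First I would verify that $\Psi_N$ is a well-defined rational map whose domain contains the full forward orbit of $x$ (this follows from hypothesis (1)), and that by hypothesis (3) the image of the orbit lies in the finite-rank subgroup $G^{N+1}\le(\mathbb{G}_m^{N+1})(K)$. Let $Y_N$ denote the Zariski closure of $\Psi_N(X)$. Since $X$ is irreducible and $O_\varphi(x)$ is Zariski dense in $X$ by hypothesis (2), $Y_N$ is irreducible and the image of the orbit is Zariski dense in $Y_N$; Laurent's theorem then forces $Y_N$ to be a single coset $\gamma_N\cdot T_N$ of an algebraic subtorus $T_N\subseteq\mathbb{G}_m^{N+1}$, with $\gamma_N\in G^{N+1}$.

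Next I would observe that the dimensions $d_N:=\dim T_N$ are weakly increasing (via the ``forget last coordinate'' surjection $Y_{N+1}\twoheadrightarrow Y_N$) and bounded above by $\dim X$, so they stabilize at some value $d$ for all $N\ge N_0$. For such $N$, both the projection $\pi\colon T_{N+1}\to T_N$ forgetting the last coordinate and the ``shift'' $\tau\colon T_{N+1}\to T_N$ forgetting the first are surjective group homomorphisms of tori of the same dimension, hence isogenies. I would then take $\Theta\colon X\dashrightarrow T_N\xrightarrow{\sim}\mathbb{G}_m^d$ to be the translated map $\gamma_N^{-1}\cdot\Psi_N$, and $\Phi$ to be the endomorphism induced by $\tau\circ\pi^{-1}$; commutativity of the square follows from the tautological identities $\Psi_N\circ\varphi=\tau\circ\Psi_{N+1}$ and $\Psi_N=\pi\circ\Psi_{N+1}$. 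The factorization $f=g\circ\Theta$ with $g(t_1,\ldots,t_d)=Ct_1^{i_1}\cdots t_d^{i_d}$ then falls out by reading off the first coordinate of $\Psi_N$, using that each coordinate of $T_N\subseteq\mathbb{G}_m^{N+1}$ restricts to a Laurent monomial in the chosen coordinates of $\mathbb{G}_m^d$, with $C:=(\gamma_N)_1\in G$.

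The main obstacle I anticipate is making $\Phi$ a single-valued endomorphism rather than a multi-valued correspondence, since $\pi$ is a priori only a finite isogeny, so the inverse $\pi^{-1}$ need not exist as a morphism. Resolving this amounts to showing that the subfield $L_\infty:=K(f,\,f\circ\varphi,\,f\circ\varphi^2,\ldots)\subseteq K(X)$ is finitely generated over $K$---equivalently, that $\pi$ eventually becomes birational, which upgrades to an isomorphism because any birational group homomorphism of tori is an isomorphism. This is where the finite generation of $G$ together with the Zariski density hypothesis must truly enter; in the worst case one may instead have to post-compose $\Theta$ with a fixed isogeny of tori so that the shift lifts uniquely. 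Dominance of $\Theta$ and $\Phi$, and the claim that $O_\varphi(x)$ avoids the indeterminacy locus of $\Theta$, are then straightforward from Zariski density of the orbit and the irreducibility of $T_N$.
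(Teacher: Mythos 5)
Your construction is sound in outline, but as written it has a genuine gap precisely at the step you flag yourself: the passage from the isogeny $\pi\colon T_{N+1}\to T_N$ to a single-valued endomorphism $\Phi$. Stabilization of the dimensions $d_N$ only gives that $\pi$ and $\tau$ are isogenies, and an isogeny with nontrivial kernel does not allow you to define $\Phi=\tau\circ\pi^{-1}$ as a morphism. Your two proposed escapes --- showing that $L_\infty=K(f,\,f\circ\varphi,\,f\circ\varphi^2,\ldots)$ is finitely generated over $K$, or post-composing $\Theta$ with a fixed isogeny so that the shift ``lifts uniquely'' --- are both left unproved, and the second is too vague to check (the naive choice of auxiliary isogeny does not obviously make $\tau\circ\pi^{-1}$ well defined on all of $T_N$). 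So the proof is incomplete at its central point.

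The good news is that the gap closes easily, and not for the reason you guess: finite generation of $G$ is not what is needed there (it has already done its work in the application of Laurent's theorem). Any intermediate field of a finitely generated field extension is finitely generated, so $L_\infty\subseteq K(X)$ is finitely generated over $K$; hence the increasing chain $L_N=K(f,\ldots,f\circ\varphi^N)$ stabilizes, which says exactly that for large $N$ the projection $Y_{N+1}\to Y_N$ is birational. After translating the cosets $Y_M=\gamma_M T_M$ (take $\gamma_M=\Psi_M(x)\in G^{M+1}$), the map $\pi|_{T_{N+1}}\colon T_{N+1}\to T_N$ becomes a birational isogeny of tori, hence an isomorphism, and $\Phi:=\tau\circ\pi^{-1}$, conjugated by the translations, is a legitimate dominant endomorphism in the paper's sense; the remaining points (commutativity of the square, $f=g\circ\Theta$ with $C=(\gamma_N)_1\in G$, dominance, and avoidance of the indeterminacy locus along the orbit) go through as you indicate, once you also note that surjectivity of $\tau$ onto $Y_N$ uses the dominance of $\varphi$. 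With that repair your route is genuinely different from the paper's, which never forms the jet maps: the paper first establishes a multiplicative $S$-linear recurrence for $(f\circ\varphi^n(x))$ via a multiplicative-dependence lemma based on the $S$-unit equation theorem together with a Fatou-type lemma resting on Roquette's theorem, uses that recurrence to build $\Theta$ out of finitely many of the functions $f\circ\varphi^j$, and only then invokes Laurent's theorem; your version trades all of that recurrence machinery for the direct use of Laurent plus the field-theoretic stabilization above.
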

In positive characteristic, the situation is more subtle and the conclusion to the statement of Theorem \ref{main theorem 2} fails (see Example \ref{exam}).
One can interpret this as saying that if the entire orbit of a point under a self-map has some ``coordinate'' that lies in a finitely generated multiplicative group then there must be a compelling geometric reason causing this to occur: in this case, it is that the dynamical behaviour of the orbit is in some sense
determined by the behaviour of a related system associated with a multiplicative torus.  In fact, we prove a more general version of this result involving semigroups of maps (see Corollary \ref{cor:semi}).
As a consequence of Theorem \ref{main theorem 2}, we get the following characterization of orbits whose values lie in a group of $S$-units, which shows that on arithmetic progressions they are well-behaved.
\begin{cor} Let $K$ be an algebraically closed field of characteristic zero and let $X$ be an irreducible quasiprojective variety with a dominant self-map $\varphi:X\to X$ and let $f:\mathbb{X}\to \mathbb{P}^1$ be a dominant rational map, all defined over $K$.  Suppose that $x\in X$ has the following properties:
\begin{enumerate}
\item every point in the orbit of $x$ under $\varphi$ avoids the indeterminacy loci of $\varphi$ and $f$;
\item there is a finitely generated multiplicative subgroup $G$ of $K^*$ such that $f\circ \varphi^n(x)\in G$ for every $n\in \mathbb{N}_0$.
\end{enumerate}
Then there are integers $p$ and $L$ with $p\ge 0$ and $L>0$ such that if $h_1,\ldots ,h_m$ generate $G$ then there are integer valued linear recurrences $b_{j,1}(n),\ldots ,b_{j,m}(n)$ for $j\in \{0,\ldots ,L\}$ such that
$$f\circ \varphi^{Ln+j} (x) = \prod_i h_i^{b_{j,i}(n)}$$ for $n\ge p$.
\label{orbit}
\end{cor}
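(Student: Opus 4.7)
\emph{Proof plan.} The strategy is to pass to the Zariski closure of the orbit and then apply Theorem \ref{main theorem 2} on each irreducible component. Set $Y := \overline{O_\varphi(x)} \subseteq X$; since $\varphi(O_\varphi(x)) \subseteq O_\varphi(x)$, the map $\varphi$ restricts to a rational self-map of $Y$. Each irreducible component of $Y$ contains a Zariski dense subset of orbit points, so $\varphi$ sends each such component dominantly onto another, and the orbit of $x$ eventually traverses the components in a single recurrent cycle. After relabeling, one therefore obtains integers $p \geq 0$ and $L > 0$ and a list of components $Y_0, \ldots, Y_{L-1}$ such that $\varphi^{Ln+j}(x) \in Y_j$ for all $n \geq p$ and all $j \in \{0, \ldots, L-1\}$. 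In particular $\psi_j := \varphi^L|_{Y_j}$ is a dominant self-map of the irreducible variety $Y_j$, and the $\psi_j$-orbit of $z_j := \varphi^{Lp+j}(x) \in Y_j$ is Zariski dense with every value of $f|_{Y_j}$ lying in $G$.

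For each $j$, if $f|_{Y_j}$ is constant the conclusion is immediate. Otherwise $f|_{Y_j}$ is dominant, and applying Theorem \ref{main theorem 2} to $(Y_j, \psi_j, f|_{Y_j}, z_j)$ supplies a dominant rational map $\Theta_j: Y_j \dashrightarrow \mathbb{G}_m^{d_j}$, a dominant endomorphism $\Phi_j$ of $\mathbb{G}_m^{d_j}$ with $\Theta_j \circ \psi_j = \Phi_j \circ \Theta_j$, and a monomial $g_j(t_1, \ldots, t_{d_j}) = C_j t_1^{\alpha_{j,1}} \cdots t_{d_j}^{\alpha_{j,d_j}}$ with $C_j \in G$ satisfying $f|_{Y_j} = g_j \circ \Theta_j$. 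Writing $\Phi_j$ in coordinates with integer exponent matrix $A_j = (a_{j,k,l})$ and translation constants $\gamma_{j,k} \in K^*$ and iterating, each coordinate of $\Phi_j^n(w_j)$ (for $w_j := \Theta_j(z_j)$) is a monomial in the coordinates of $w_j$ and the $\gamma_{j,k}$'s whose exponents are entries of $A_j^n$ or of $\sum_{l<n} A_j^l$, and hence are integer-valued linear recurrences in $n$. Substituting into $g_j$ yields
\[ f(\varphi^{Ln+j}(x)) = C_j \prod_s \zeta_{j,s}^{R_{j,s}(n-p)} \qquad (n \geq p) \]
for certain $\zeta_{j,s} \in K^*$ and integer-valued linear recurrences $R_{j,s}$.

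To rewrite this in terms of the given generators $h_1, \ldots, h_m$ of $G$, let $G^\sharp \leq K^*$ be the finitely generated subgroup generated by the $h_i$, $C_j$, and the $\zeta_{j,s}$, and pass to the free quotient $\overline{G^\sharp} := G^\sharp/(G^\sharp)_{\mathrm{tors}}$. In $\overline{G^\sharp}$ the image of $f(\varphi^{Ln+j}(x))\, C_j^{-1}$ is the linear-recurrent $\Z$-linear combination $\sum_s R_{j,s}(n-p)\, \overline{\zeta_{j,s}}$, and it lies in the subgroup $\overline{G}$. Selecting a $\Z$-basis of $\overline{G}$ from among the $\overline{h_i}$'s and performing integer linear algebra produces integer-valued linear recurrences $\tilde{b}_{j,i}(n)$ with $\overline{f(\varphi^{Ln+j}(x))\, C_j^{-1}} = \sum_i \tilde{b}_{j,i}(n)\, \overline{h_i}$. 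The residual discrepancy $\tau_j(n) := f(\varphi^{Ln+j}(x))\, C_j^{-1} \prod_i h_i^{-\tilde{b}_{j,i}(n)}$ lies in the finite group $G_{\mathrm{tors}}$; since any integer-valued linear recurrence is eventually periodic modulo any positive integer, $\tau_j(n)$ is eventually periodic, and any eventually periodic integer sequence is itself a linear recurrence (with characteristic polynomial dividing $X^Q - 1$ for the period $Q$). Writing each periodic value of $\tau_j(n)$ as a product of the $h_i$'s and combining with constant exponents expressing $C_j$ as a product of the $h_i$'s produces the desired $b_{j,i}(n)$, after enlarging $p$ past the pre-period of every $\tau_j$.

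The principal technical obstacle is the torsion adjustment in the final step: if $G$ contains nontrivial roots of unity then the decomposition carried out in the free quotient $\overline{G}$ does not lift uniquely back to $G$, and a residual factor in $G_{\mathrm{tors}}$ appears. This is resolved by exploiting that integer linear recurrences are eventually periodic modulo any fixed integer, which forces the residual factor to be eventually periodic and hence itself a linear-recurrent combination of the $h_i$.
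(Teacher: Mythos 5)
Your argument is correct in substance and follows the same overall strategy as the paper's proof: stabilize the orbit closure into a cycle of irreducible components visited along arithmetic progressions, apply Theorem \ref{main theorem 2} to each component, observe that iterating a torus endomorphism $t\mapsto \gamma\cdot t^A$ produces exponents that are entries of $A^n$ and of $\sum_{l<n}A^l$ and hence integer-valued linear recurrences (Cayley--Hamilton), and finally rewrite everything in terms of the given generators $h_1,\ldots,h_m$. Where you genuinely diverge is in the last step. The paper invokes the refinement of Remark \ref{rem:semi} --- the coordinates of $\Theta(x)$, the multiplicative constants of the torus endomorphism, and $C$ all lie in $G$ --- so it can write the coordinates of $\Psi^n(z_0)$ directly as words in the $h_i$ and derive an affine recurrence $\mathbf{v}(n+1)=A\mathbf{v}(n)+\mathbf{p}$ for the exponent vectors. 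You instead use only the statement of Theorem \ref{main theorem 2}, allow the constants $\gamma_{j,k}$ and the coordinates of $w_j$ to be arbitrary elements of $K^*$, and transfer back to the $h_i$ by passing to the torsion-free quotient of an enlarged finitely generated group and absorbing the torsion discrepancy via eventual periodicity of integer linear recurrences modulo a fixed integer. Both routes work; yours is self-contained relative to the theorem as stated, at the cost of the extra $\Z$-linear algebra and torsion bookkeeping that the paper avoids by knowing its torus data already lies in $G$.

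Two small slips should be repaired, though neither affects the validity of the argument. First, for $Y=\overline{O_\varphi(x)}$ it is not true that every irreducible component is sent dominantly onto another: initial orbit points outside the closure of the tail give isolated transient components, for which the claim fails. The correct reduction, as in the paper, is to the stabilized tail closure $X_{\ge m}=\overline{\{\varphi^n(x)\colon n\ge m\}}$, on which $\varphi$ is dominant and permutes the components; this yields exactly the data $p$, $L$, $Y_j$ with dense $\varphi^L$-orbits along each residue class that you then use. Second, ``selecting a $\Z$-basis of $\overline{G}$ from among the $\overline{h_i}$'s'' is not possible in general, since a generating set of a free abelian group need not contain a basis (e.g.\ $\{2,3\}$ in $\Z$). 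What you need, and what works, is a fixed $\Z$-linear section $s$ of the surjection $\Z^m\to\overline{G}$ sending the $i$-th standard generator to $\overline{h_i}$: the coordinates of $\overline{a_nC_j^{-1}}$ in a basis of the ambient free group are integer-valued linear recurrences in $n$, so $\tilde{b}_{j,i}(n):=s\bigl(\overline{a_nC_j^{-1}}\bigr)_i$ is again linear-recurrent, and the rest of your torsion argument proceeds unchanged.
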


Finally, we apply our results to $D$-finite power series, which, as stated above, are the generating functions of sequences that fall under the dynamical framework we study. We use Theorem~\ref{main theorem} to prove the following result.

\begin{thm}
\label{thm:Dfinite}
Let $F(x)=\sum_{n\geq 0} a_nx^n$ be a $D$-finite power series defined over a field $K$ of characteristic zero. Consider the sets
\[ N:= \{ n\geq 0 : a_n\in G\}\qquad and \qquad N_0:= \{ n\geq 0 : a_n\in G\cup \{0\}\},\]
where $G\leq K^*$ is a finitely generated group. Then $N$ and $N_0$ are both expressible as a union of finitely many infinite arithmetic progressions along with a set of Banach density zero.
\end{thm}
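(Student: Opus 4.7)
The plan is to reduce Theorem~\ref{thm:Dfinite} to two applications of Theorem~\ref{main theorem}: one with the group $G$ (to describe $N$), and a second with the trivial subgroup of $K^*$ (to capture the extra zeros appearing in $N_0$).

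For the first step I would invoke the standard realization of a $D$-finite sequence as a dynamical sequence, developed in \cite{BGS, BFS} and cited in the introduction. Starting from the ODE satisfied by $F$, one derives a polynomial recurrence $\sum_{i=0}^{r} q_i(n) a_{n+i} = 0$ on its coefficients. After shifting the index past the finitely many nonnegative integer zeros of the leading coefficient $q_r$, one sets $X := \mathbb{A}^{r+1}$ and defines a rational self-map $\varphi$ of $X$ that simultaneously updates $r$ consecutive coefficients and a counter for $n$; letting $f : X \dashrightarrow \mathbb{P}^1$ be the projection onto the first coordinate and choosing $\vec v_0 \in X$ accordingly, one arranges $a_{n+n_0} = f \circ \varphi^n(\vec v_0)$ with the forward $\varphi$-orbit of $\vec v_0$ well-defined and avoiding the indeterminacy locus of $f$. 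Applying Theorem~\ref{main theorem} to the data $(X, \varphi, f, \vec v_0, G)$ yields at once the required description of $N$; the finitely many indices lost to the initial shift can only contribute to the density-zero part.

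For $N_0$, write $N_0 = N \cup Z$ with $Z := \{n \geq 0 : a_n = 0\}$. To handle $Z$, I would apply Theorem~\ref{main theorem} a second time to the same dynamical system, but with $f$ replaced by the rational function $h := 1 + f$ (which has the same indeterminacy as $f$) and $G$ replaced by the trivial subgroup $\{1\} \leq K^*$: since $h \circ \varphi^n(\vec v_0) = 1$ precisely when $f \circ \varphi^n(\vec v_0) = 0$, this yields that $Z$ is likewise a finite union of arithmetic progressions plus a Banach density zero set. (Equivalently, one could invoke the relaxed Dynamical Mordell--Lang theorem of \cite{BGT, Gig, Petsche} applied to the zero subvariety of $f$.) Since the class of sets of this form is closed under finite union, the conclusion follows for both $N$ and $N_0$.

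The hard part here is not conceptual but technical: one must set up the dynamical encoding so that it actually satisfies the hypotheses of Theorem~\ref{main theorem}. Concretely, one selects the index shift so that $q_r(n) \neq 0$ for every relevant integer $n$, checks that the resulting $\varphi$ is a well-defined rational self-map, and confirms that the forward orbit of $\vec v_0$ avoids the indeterminacy loci of both $\varphi$ and $f$ (and hence also of $h$). These standard adjustments alter only finitely many terms of $(a_n)$ and therefore do not disturb the structural claim about $N$ or $N_0$.
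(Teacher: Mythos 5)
Your proposal is correct and follows essentially the same route as the paper: encode the $P$-recursive coefficient sequence as $a_{n+n_0}=f\circ\varphi^n(\vec v_0)$ on affine space (shifting past the poles/zeros of the leading recurrence coefficient), apply Theorem~\ref{main theorem} to get $N$, and treat $N_0=N\cup Z$ by controlling the zero set $Z$. The only cosmetic difference is that you handle $Z$ via Theorem~\ref{main theorem} with $h=1+f$ and the trivial group, whereas the paper cites \cite{BGT} directly for $Z$ --- but these are the same result, as the paper itself notes that the trivial-group case of Theorem~\ref{main theorem} is the relaxed Dynamical Mordell--Lang theorem.
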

When $G=\{1\}$, this recovers a result of Methfessel \cite{Meth} and B\'ezivin \cite{Bez2}.  We conclude by revisiting the earlier works of P\'olya \cite{Pol} and B\'ezivin \cite{Bez} in terms of the dynamical results obtained in this paper.

\subsection{Conventions} Throughout, $\N:=\{1,2,3,\ldots\}$ and $\N_0:=\N\cup\{0\}$. If $R$ is a ring then $R^*$ is its multiplicative group of units.
An arithmetic progression is a set of the form $\{a+bn\}_{n\geq 0}\subseteq \N_0$ where $a,b\in \N_0$. A singleton counts as an arithmetic progression with $b=0$. A subset $N\subseteq \N_0$ is called \textit{eventually periodic} if it is a union of finitely many arithmetic progressions.

\subsection{Organization} In \textsection \ref{sec:AB} we develop a general theory of recurrences for sequences indexed by semigroups, which will be needed in proving our main results. In \textsection \ref{sec:MD} we prove a semigroup version of Theorem \ref{main theorem 2}. In \textsection \ref{sec:MT}, we give the proof of Theorem \ref{main theorem}, and in \textsection \ref{sec:Height} we give results on the heights of points in orbits for dynamical systems defined over $\bar{\Q}$.  Finally, \textsection \ref{sec:Dfin} gives applications of our results to $D$-finite power series.

\subsection*{Acknowledgments} We thank Dragos Ghioca for helpful comments. In this work, J.\ P.\ Bell and E.\ Hossain were supported by NSERC grant RGPIN-2016-03632;
S.\ Chen was supported by the NSFC
grants 11871067, 11688101 and the Fund of the Youth Innovation Promotion Association, CAS.

\smallskip

\section{Linear recurrences in abelian groups}\label{sec:AB}

In this section we develop the necessary background on general recurrences in abelian groups.  Because we will ultimately prove a result about a semigroup of morphisms, we will work with sequences  indexed by monoids in this section.  The proofs of these results become significantly simpler when the underlying monoid is just $(\N_0,+)$, which is the key case needed for dealing with a single map.

Let $(A,+)$ be an abelian group, let $S$ be a finitely generated monoid with identity $1=1_S$, and let $Z$ be a set upon which $S$ acts. Then the space of sequences $$A^{Z}:=\{u:Z\rightarrow A\}$$ is an abelian group, and for $u\in A^{Z}$ and $z\in Z$, we use the notations $u_z=u(z)$ interchangeably. Given a ring $R$, we let $R[S]$ denote the semigroup algebra of $S$ with coefficient ring $R$; that is, the elements of $R[S]$ are formal $R$-linear combinations of elements of $S$, where we multiply via the rule $(rs)\cdot (r's')=rr' ss'$ for $r,r'\in R$ and $s,s'\in S$ and we extend this multiplication bilinearly.  Notice that $A^{Z}$ has a natural $\Z[S]$-module structure given by the rule:
\[ (ms\cdot u)_z = m u_{sz} \qquad {\rm for~} m\in\Z, s\in S, {\rm and}~z\in Z. \]  We call the set of $f\in \Z[S]$ such that $f\cdot u=0$ the \emph{annihilator} of $u$.  It is not hard to check that the annihilator of $u$ is a two-sided ideal of $\Z[S]$.

\begin{defn}
\label{quasilinear defn}
Let $A$ be an additive abelian group, let $S$ be a finitely generated
monoid that acts on a set $Z$, and let $u\in A^{Z}$ be a sequence. Then $u$ satisfies an $S$-\emph{linear recurrence}
if the annihilating ideal $I$ of $u$ has the property that $\Z[S]/I$ is a finitely generated $\Z$-module.
We say that the sequence $u$ satisfies an $S$-\textit{quasilinear recurrence} if there are a set of generators $s_1,\ldots, s_d$ of $S$ and a natural number $M$ such that whenever $s_{i_1}\cdots s_{i_M}$ is an element of $S$ that is 
a product of $M$ elements of $s_1,\ldots ,s_d$, there is an element in $I$ of the form
\[\sum_{j=1}^M c_j s_{i_j}\cdots s_{i_M}\]
with $c_1,\ldots ,c_M\in \mathbb{Z}$ satisfying that $\gcd(c_1,\ldots ,c_M)=1$.
\end{defn}

The reason for introducing the notion of quasilinear recurrences is for later convenience, as it is often easier to demonstrate that a quasilinear recurrence exists.

\begin{example}
In general, a quasilinear recurrence may not be linear. To see this, let $S=\mathbb{N}_0$ and let $A$ be the additive group $(\Q,+)$.  Then if we consider the sequence $a_n=1/2^n$ and identify $\Z[S]$ with $\Z[x]$, then this sequence is annihilated by the primitive polynomial $f(x)=2x-1$, but it does not satisfy an $S$-linear recurrence since $a_{n+1}$ is never in the additive group generated by the initial terms $a_1,\ldots,a_n$.
\end{example}
We will make use of the following well-known facts throughout this section.
\begin{lemma} Let $T$ be a commutative noetherian integral domain and let $R$ be a finitely generated associate (but not necessarily commutative) $T$-algebra and suppose that $I$ and $J$ are two ideals of $R$ such that both $R/I$ and $R/J$ are finitely generated $T$-modules. Then the following hold:
\begin{enumerate}
\item[(a)] $R/IJ$ is also a finitely generated $T$-module;
\item[(b)] $I$ and $J$ are finitely generated as left ideals of $R$.
\end{enumerate}
\label{lem:FACTS}
\end{lemma}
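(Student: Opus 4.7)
The plan is to prove part (b) first by a direct ``Nakayama-style'' argument, and then to deduce (a) from its right-sided analogue, combined with the short exact sequence $0 \to I/IJ \to R/IJ \to R/I \to 0$.

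For (b), I would fix $T$-algebra generators $r_1,\ldots,r_n$ of $R$ and elements $a_1,\ldots,a_m\in R$ whose images generate $R/I$ as a $T$-module, with $a_1=1$. For each pair $(p,q)$, pick $t_{pqk}\in T$ so that
\[ b_{pq} := r_p a_q - \sum_k t_{pqk} a_k \in I,\]
and let $J_0\subseteq R$ be the (finitely generated) left ideal generated by the $b_{pq}$. By induction on the length of a noncommutative monomial in $r_1,\ldots,r_n$, every element of $R$ is congruent modulo $J_0$ to a $T$-linear combination of $a_1,\ldots,a_m$: the base case is $a_1=1$, and the inductive step uses both that $J_0$ is closed under left multiplication by $R$ and the defining relation $r_p a_q\equiv \sum_k t_{pqk}a_k\pmod{J_0}$. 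Hence $R/J_0$ is a finitely generated $T$-module, and since $T$ is noetherian the $T$-submodule $I/J_0$ is finitely generated as well. Lifting a finite $T$-module generating set of $I/J_0$ and appending the $b_{pq}$ produces a finite left-ideal generating set for $I$. The identical argument applies to $J$, and moreover the whole scheme is manifestly left-right symmetric, so it also yields that $I$ is finitely generated as a \emph{right} ideal.

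For (a), I would apply this right-sided form of (b) to write $I = x_1 R + \cdots + x_k R$. Since $J$ is a left ideal and $1\in R$, we have $RJ=J$, and hence
\[ IJ = \sum_{p=1}^k x_p R\, J = \sum_{p=1}^k x_p J.\]
Therefore $I/IJ$ is generated as a right $R$-module by $\bar x_1,\ldots,\bar x_k$, and because the right action of $J$ annihilates each generator, $I/IJ$ is actually a right $R/J$-module on the same finite generating set. Since $R/J$ is a finitely generated $T$-module, say with $T$-generators $\bar c_1,\ldots,\bar c_\ell$, and since $T$ lies in the center of $R$, the finite collection of elements $\bar x_p\bar c_q$ spans $I/IJ$ over $T$. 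Plugging this into the short exact sequence $0 \to I/IJ \to R/IJ \to R/I \to 0$, together with the hypothesis that $R/I$ is a finitely generated $T$-module, then yields (a).

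The main subtlety I anticipate is the left-right asymmetry in the inherited structure on $I/IJ$: the right $R/J$-module structure (which is what lets us use the hypothesis on $R/J$) requires $I$ to be finitely generated as a \emph{right} $R$-module, whereas (b) as stated only addresses the left case. The cleanest resolution is the observation above, namely that the argument for (b) is symmetric in the choice of side and so automatically supplies the right-sided version; alternatively one could run both directions in parallel from the start.
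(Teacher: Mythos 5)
Your proof is correct, but it is organized differently from the paper's. For part (b) your argument is essentially the paper's: the paper also forms the elements $u_iu_j-\sum_k c_{i,j,k}u_k\in I$ coming from a $T$-module spanning set of $R/I$ that generates $R$ as a $T$-algebra, and invokes noetherianity of $T$ (there via the submodule of $T^d$ of coefficient tuples landing in $I$, in your version via the submodule $I/J_0$ of the finitely generated $T$-module $R/J_0$ — the same mechanism). The real divergence is in part (a) and in the logical order. The paper proves (a) first and directly, with no appeal to (b) or to noetherianity: it takes a single set $u_1,\ldots,u_d$ (with $u_1=1$) spanning both $R/I$ and $R/J$ and generating $R$ as a $T$-algebra, and shows by induction on word length that any product of length $\ge 5$ is congruent mod $IJ$ to a $T$-combination of products of length at most $4$, using that $\bigl(u_{i_1}\cdots u_{i_{M-2}}-\sum a_iu_i\bigr)\bigl(u_{i_{M-1}}u_{i_M}-\sum b_iu_i\bigr)\in IJ$. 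You instead prove (b) first, observe (correctly, since $I$ and $J$ are two-sided) that the argument also gives $I$ finitely generated as a right ideal, and then deduce (a) from the exact sequence $0\to I/IJ\to R/IJ\to R/I\to 0$, noting that $I/IJ$ is a finitely generated right $R/J$-module and hence a finitely generated $T$-module. Your route is more structural and makes the role of the sidedness explicit (your closing remark about needing the right-handed version of (b) is exactly the right point, and your resolution is valid); the paper's route for (a) is more elementary, self-contained, and does not consume the noetherian hypothesis, which it saves for (b). Both are complete proofs under the stated hypotheses.
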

\begin{proof}
We first prove (a).
Let $U=\{u_1,\ldots ,u_d\}$ be elements of $R$ with $u_1=1$ whose images span both $R/I$ and $R/J$ as $T$-modules and that generate $R$ as a $T$-algebra.
We prove that every finite product of elements from $u_1,\ldots ,u_d$ is congruent to a $T$-linear combination of elements of the form $u_iu_ju_ku_{\ell}$ modulo $IJ$.  (Since $u_1=1$, this includes products of smaller length.)  We prove this by induction on the length of the product, with the case for products of length at most four following by construction.  Suppose now that the result holds for all products of elements from $u_1,\ldots ,u_d$ of length less than $M$ with $M\ge 5$, and consider a product $u_{i_1}\cdots u_{i_M}$.  Then by our choice of $U$ we have
$$u_{i_1}\cdots u_{i_M-2} \equiv \sum a_{i} u_i ~(\bmod~I)$$ and
$$u_{i_{M-1}} u_{i_M} \equiv \sum b_{i} u_i ~(\bmod~J)$$ for $a_i,b_i\in T$.
Hence
$$\left(u_{i_1}\cdots u_{i_M-2} - \sum a_{i} u_i\right) \left( u_{i_{M-1}}u_{i_M}- \sum b_{i} u_i \right)\in IJ.$$
Then expanding the product, we see that
 $u_{i_1}\cdots u_{i_M}$ is congruent to a $T$-linear combination of products of $u_1,\ldots ,u_d$ of length at most $\max(M-1,3)=M-1$, and so by the induction hypothesis it is in the span of products of length at most $4$.  Thus (a) now follows by induction.

 For part (b), it suffices to prove that $I$ is finitely generated as a left ideal.  Then since $U$ spans $R/I$ as a $T$-module, there exist elements $c_{i,j,k}\in T$ such that
 $\alpha_{i,j}:=u_i u_j - \sum_k c_{i,j,k} u_k\in I$ for $1\le i,j,k\le d$.  Next, consider the submodule $M$ of $T^d$ consisting of $(t_1,\ldots, t_d)\in T^d$ such that $\sum t_i u_i\in I$.  Then since $T$ is noetherian, $M$ is finitely generated as a $T$-module and we pick elements
 $\beta_k = \sum t_{i,k} u_i$ for $k=1,\ldots ,s$ such that $(t_{1,k},\ldots ,t_{d,k})$ with $k=1,\ldots ,s$ generate $M$.  Then let $L$ denote the finitely generated left ideal in $R$ generated by
 the $\alpha_{i,j}$ and $\beta_k$.  By construction $L\subseteq I$ and so to complete the proof of (b) it suffices to show that $I\subseteq L$.  Since the $\alpha_{i,j}$ are in $L$, a straightforward induction gives that every finite product of $u_1,\ldots ,u_d$ is congruent modulo $L$ to a $T$-linear combination of $u_1,\ldots ,u_d$.  It follows that if $f\in I$ then
 $f \equiv \sum t_i u_i~(\bmod ~L)$ for some $t_1,\ldots ,t_d\in T$.  But since $L\subseteq I$, $t_1u_1+\cdots +t_du_d\in I$ and so by construction, $t_1u_1+\cdots +t_d u_d$ is a $T$-linear combination of the $\beta_k$ and hence it is in $L$.  It then follows that $f\in L$, giving us that $I\subseteq L$ and showing that $I=L$ and so $I$ is finitely generated as a left ideal.
\end{proof}
\begin{cor}
Let $S$ be a finitely generated monoid acting on a set $Z$, let $A$ and $B$ be abelian groups, and let $u\in A^Z$ and $v\in B^Z$ be sequences satisfying $S$-linear recurrences.
Then $(u,v)=(u_z,v_z)_{z\in Z}\in (A\oplus B)^Z$ also satisfies an $S$-linear recurrence. \label{rem:sumofgroups}
\end{cor}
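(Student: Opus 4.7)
The plan is to identify the annihilator of $(u,v)$ explicitly and then invoke Lemma \ref{lem:FACTS}. Let $I$ be the annihilator of $u$ in $\Z[S]$ and let $J$ be the annihilator of $v$ in $\Z[S]$. By hypothesis, both $\Z[S]/I$ and $\Z[S]/J$ are finitely generated as $\Z$-modules.

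First I would observe that the annihilator of the paired sequence $(u,v)\in (A\oplus B)^Z$ is exactly $I\cap J$, since for $f\in\Z[S]$ one has $f\cdot(u,v)=(f\cdot u,\,f\cdot v)=0$ if and only if $f\in I$ and $f\in J$. So the task reduces to showing that $\Z[S]/(I\cap J)$ is finitely generated as a $\Z$-module.

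To handle this, I would use either of two routes; the cleaner one is to consider the natural $\Z[S]$-linear map
\[
\Z[S]/(I\cap J)\longrightarrow \Z[S]/I\;\oplus\;\Z[S]/J,\qquad f+(I\cap J)\longmapsto (f+I,\,f+J),
\]
which is injective by the very definition of $I\cap J$. The right-hand side is a finitely generated $\Z$-module by assumption, and since $\Z$ is Noetherian, so is the submodule on the left. Alternatively, one can use the inclusion $IJ\subseteq I\cap J$: by Lemma \ref{lem:FACTS}(a) applied with $T=\Z$ and $R=\Z[S]$, the quotient $\Z[S]/IJ$ is a finitely generated $\Z$-module, and hence so is its further quotient $\Z[S]/(I\cap J)$.

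There is no real obstacle here; the only mild subtlety is recognizing that the annihilating ideal of a direct sum of sequences is the intersection (not the product) of the individual annihilating ideals, and then reducing to one of the two finiteness statements already available. The argument works uniformly for any finitely generated monoid $S$ and does not require any special structure beyond Noetherianity of the base ring $\Z$.
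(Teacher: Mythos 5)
Your proposal is correct, and your second route is precisely the paper's proof: the paper simply observes that $IJ$ annihilates both $u$ and $v$, hence annihilates $(u,v)$, and invokes Lemma \ref{lem:FACTS}(a) (with $T=\Z$, $R=\Z[S]$, using that $S$ is finitely generated so $\Z[S]$ is a finitely generated $\Z$-algebra) to conclude that $\Z[S]/IJ$, and hence its quotient $\Z[S]/\mathrm{Ann}(u,v)$, is a finitely generated $\Z$-module. Your first route is a genuinely different and somewhat cleaner variant: you identify the annihilator of $(u,v)$ exactly as $I\cap J$ and embed $\Z[S]/(I\cap J)$ into $\Z[S]/I\oplus\Z[S]/J$, so that finite generation follows from Noetherianity of $\Z$ alone; this bypasses Lemma \ref{lem:FACTS} entirely and does not even use the finite generation of $S$ for this step. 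The paper's version buys uniformity with the ideal-product manipulations it reuses elsewhere (e.g.\ in Lemma \ref{quasilinear implies linear}), whereas yours isolates the minimal hypotheses; also note the paper never needs the annihilator pinned down exactly, since any ideal contained in it with finitely generated quotient suffices, a point your argument makes explicit rather than implicit.
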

\begin{proof}
Let $I$ and $J$ be respectively the annihilators of $u$ and $v$.   Then $\Z[S]/I$ and $\Z[S]/J$ are finitely generated $\Z$-modules and since $S$ is finitely generated, we have that
$\Z[S]/IJ$ is a finitely generated $\Z$-module.  Since $IJ$ annihilates both $u$ and $v$, it also annihilates $(u,v)$.  The result follows.
\end{proof}
The following lemma generalizes the classical Fatou's lemma on rational
power series in $\Z[[x]]$.
\begin{lemma}
\label{quasilinear implies linear}
Let $A$ be a finitely generated abelian group, let $S$ be a finitely generated monoid acting on a set $Z$, and let $u\in A^Z$. If $u=(u_z)_{z\in Z}$ satisfies an $S$-quasilinear recurrence then $u$ satisfies an $S$-linear recurrence.
\end{lemma}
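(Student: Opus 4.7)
The plan is to reduce, via the structure theorem for finitely generated abelian groups together with Corollary \ref{rem:sumofgroups}, to the two special cases $A = \mathbb{Z}$ and $A$ finite; in each of these, one exploits linear algebra over an appropriate coefficient field ($\mathbb{Q}$ or $\mathbb{F}_p$, respectively).

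The central auxiliary fact is a ``field version'' of the lemma: if $k$ is a field, $S$ is a finitely generated monoid, and $J$ is a two-sided ideal of $k[S]$ containing, for every word $w = s_{i_1} \cdots s_{i_M}$ of length $M$ in a fixed set of generators $s_1, \ldots, s_d$, an element of the form $\sum_{j=1}^M c_j s_{i_j} \cdots s_{i_M}$ with not all $c_j$ equal to zero, then $k[S]/J$ is finite-dimensional over $k$. To prove this, fix such a word $w$ and let $j^\star$ be the smallest index with $c_{j^\star} \neq 0$; the relation expresses $s_{i_{j^\star}} \cdots s_{i_M}$ modulo $J$ as a $k$-linear combination of strictly shorter suffixes, and left-multiplying by the prefix $s_{i_1} \cdots s_{i_{j^\star - 1}}$ (which is legal since $J$ is two-sided) expresses $w$ modulo $J$ as a $k$-linear combination of words of length at most $M-1$. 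Induction on word length then shows $k[S]/J$ is spanned by the finitely many words of length at most $M-1$.

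For $A = \mathbb{Z}$, let $I = \mathrm{Ann}(u) \subseteq \mathbb{Z}[S]$ and $I_\mathbb{Q} = I \cdot \mathbb{Q}[S]$. Since $\gcd(c_1,\ldots,c_M)=1$, the quasilinear relations remain nontrivial in $I_\mathbb{Q}$, so the field version applied over $\mathbb{Q}$ shows that the $\mathbb{Q}$-vector space $V := \mathbb{Q}[S] \cdot u \subseteq \mathbb{Q}^Z$ is finite-dimensional, of some dimension $r$. Choose a finite subset $\{z_1, \ldots, z_r\} \subseteq Z$ on which coordinate projection $V \to \mathbb{Q}^r$ is injective; its restriction yields an injection $V \cap \mathbb{Z}^Z \hookrightarrow \mathbb{Z}^r$, so $V \cap \mathbb{Z}^Z$ is a finitely generated free abelian group. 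Since $\mathbb{Z}[S] \cdot u \subseteq V \cap \mathbb{Z}^Z$ and $\mathbb{Z}$ is noetherian, $\mathbb{Z}[S] \cdot u \cong \mathbb{Z}[S]/I$ is itself finitely generated over $\mathbb{Z}$, which is the desired $S$-linear recurrence.

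For $A$ finite with $N := |A|$, note that $N \in I$, so $\mathbb{Z}[S]/I$ is a $\mathbb{Z}/N\mathbb{Z}$-module, and the Chinese Remainder Theorem reduces matters to showing each $p$-primary summand is finite. The field version over $\mathbb{F}_p$ gives finiteness of the mod-$p$ reduction, and a Nakayama-style filtration argument (each successive quotient $p^k M / p^{k+1} M$ is a quotient of $M/pM$ via multiplication by $p^k$) lifts this to finiteness modulo $p^e$ for every $e \geq 1$. Finally, for general finitely generated $A \cong \mathbb{Z}^r \oplus T$, one decomposes $u$ into $r+1$ coordinate factors, each inheriting the quasilinear recurrence (since $\mathrm{Ann}(u)$ is contained in the annihilator of every projection), applies the two cases above to get $S$-linear recurrences for each factor, and reassembles them via iterated application of Corollary \ref{rem:sumofgroups}. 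I expect the main delicate point to be the field version above, and specifically the case $c_1 = 0$, where it is essential that $J$ is two-sided so as to permit the left-multiplication by the prefix that makes the length reduction for $w$ go through.
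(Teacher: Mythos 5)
Your proof is correct and, while it shares the paper's overall skeleton, it diverges in a way worth noting. Like the paper, you reduce via Corollary \ref{rem:sumofgroups} to a torsion-free part and a torsion part, and your treatment of $A=\Z$ is essentially the paper's argument in cleaner packaging: the paper also passes to $\Z[S]/I\otimes_{\Z}\Q$, chooses finitely many evaluation points, and embeds $\Z[S]/I$ into a finitely generated free abelian group (it does this by an explicit map $\Psi$ and a torsion-freeness computation, where you instead choose points making evaluation injective on the finite-dimensional space $V$ and then invoke noetherianity of $\Z$ via $V\cap\Z^Z$). The genuine difference is the torsion case: the paper restricts to cyclic $A=\Z/n\Z$ and runs a minimal-counterexample induction on $n$, passing to $A/A_0$ with $|A_0|=p$ and then patching together the annihilators $J_{f_i}$ of the sequences $f_i\cdot u$, which is exactly why it needs Lemma \ref{lem:FACTS} (products of cofinite ideals and finite generation of $J$ as a left ideal). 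You avoid all of that: your ``field version'' is precisely the computation the paper performs only in the prime case (smallest index with unit coefficient, solve, left-multiply by the prefix) and also the justification the paper leaves implicit for its claim that $\Z[S]/I\otimes_{\Z}\Q$ is spanned by short words; having isolated it, you get finiteness of $\Z[S]/(I+p\Z[S])$ from $\gcd(c_1,\ldots,c_M)=1$ and then lift to the $p$-primary part by the filtration $M\supseteq pM\supseteq\cdots$ whose graded pieces are quotients of $M/pM$. This handles arbitrary finite $A$ at once, bypasses Lemma \ref{lem:FACTS} and the counterexample induction, and is in my view a real simplification. Two small remarks: in your field version the displayed reduction treats a word of length exactly $M$; for longer words you should say that you apply the hypothesis to the last $M$ letters and left-multiply by the full prefix, which is the same mechanism and still only uses left multiplication. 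And you are right that the delicate point is the left multiplication by the prefix, which requires the annihilator to be closed under left multiplication; this is exactly the paper's assertion that the annihilator is a two-sided ideal, and the paper's own proof (in its $\Z/p\Z$ step) uses it in the same way, so you are on equal footing with the paper there.
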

\begin{proof}
Let $I$ denote the annihilator of $u$ in $\Z[S]$.
By Corollary \ref{rem:sumofgroups}, it suffices to prove this in the case when $A$ is a cyclic group.
We first consider the case when $A=\Z$.
Then quasilinearity gives that $\Z[S]/I \otimes_{\Z} \Q$ is finite-dimensional as a $\Q$-vector space, as there is some natural number $M$ such that it is spanned by the images of all words of a set of generators of length at most $M$.
We pick $t_1,\ldots ,t_d\in S$ such that their images span $\Z[S]/I \otimes_{\Z} \Q$ as a vector space and we let $R=\Z[S]/I$.
Consider the $\Z$-submodule $N$ of $\Z^d=A^d$ spanned by elements of the form $v_z:=(u(t_1\cdot z),\ldots ,u(t_d \cdot z))$ with $z\in Z$.  Then $N$ is finitely generated and hence there exist $z_1,\ldots ,z_m\in Z$ such that $N$ is generated by $v_{z_1},\ldots ,v_{z_m}$.

Then we define a homomorphism of additive abelian groups $\Psi: \Z[S]\to A^{m}$ given by
$$s\mapsto (u(s\cdot z_1),\ldots , u(s\cdot z_m)).$$  We claim that $f\in \Z[S]$ is in the kernel of $\Psi$ if and only if $f$ annihilates $u$.  It is clear that if $f$ annihilates $u$ then it is in the kernel of $\Psi$.
Conversely, suppose that $f$ is in the kernel of $\Psi$.
Then since the images of $t_1,\ldots ,t_d$ span $R\otimes_{\Z}\Q$, there are some positive integer $m$ and some integers $c_1,\ldots ,c_d$ such that $mf- c_1 t_1-\cdots -c_d t_d\in I$.
Then for $z\in Z$, $$mf\cdot u_z = (c_1 t_1+\cdots + c_d t_d)\cdot u_z =  \sum_{i=1}^d c_i u(t_i\cdot z).$$
Observe that the right-hand side is zero if $\sum_{i=1}^d c_i u(t_i z_j)=0$ for $j=1,\ldots ,m$.  But $$\sum_{i=1}^d c_i u(t_i z_j) = mf\cdot u_{z_j}=0$$ since $f$ is in the kernel of $\Psi$.  It follows that $m\cdot f$ annihilates $u$ and since $A$ is torsion-free we have $f$ is in $I$, giving us the claim.  It follows that $\Psi$ induces an injective map from $R$ into $A^m$, and so $R$ is a finitely generated abelian group, and so $u$ satisfies an $S$-linear recurrence.

Next suppose that $A=\Z/n\Z$ with $n>0$.  We suppose towards a contradiction that there exists a sequence $u\in A^Z$ that satisfies an $S$-quasilinear recurrence but not an $S$-linear recurrence.  We may also assume that $n$ is minimal among all positive integers for which there exists such a sequence in $(\Z/n\Z)^Z$.  We note that $n$ cannot be prime.  To see this, observe that there are generators $s_1,\ldots ,s_e$ of $S$ and some $M$ such that for every $M$-fold product $s_{i_1}\cdots s_{i_M}$ of elements from $s_1,\ldots ,s_e$ we have an element in $I$ of the form
$$\sum_{j=1}^M c_j s_{i_j}\cdots s_{i_M}$$ with $c_1,\ldots ,c_M\in \mathbb{Z}$ satisfying that $\gcd(c_1,\ldots ,c_M)=1$.  In particular, there is some smallest $j_0$ for which $n$ does not divide $c_{j_0}$.
If $n$ is a prime number, then $c_{j_0}$ is invertible modulo $n$.
Then by construction $$s_{i_1}\cdots s_{i_m} \equiv -c_{j_0}^{-1} \sum_{\ell=j_0+1}^M c_{\ell} s_{i_1}\cdots s_{i_{j_0-1}}s_{i_{\ell}}\cdots s_{i_M}~(\bmod ~I),$$ where
we take $c_{j_0}^{-1}$ to be an integer that is a multiplicative inverse of $c_{j_0}$ modulo $n$.  Then $\Z[S]/I$ is a $\Z/n\Z$-module spanned by words of length at most $M$ in this case. 
This contradicts our assumption, and so $n$ has a prime factor $p$ and $n=pn_0$ with $n_0>1$.  Now let $A_0=\{x\in A\colon px=0\}$. 
Then $\bar{u} := (u_z+A_0)_{z\in Z}$ satisfies an $S$-quasilinear recurrence and since $A/A_0$ is cyclic of order $n_0<n$, we have that $\bar{u}$ satisfies an $S$-linear recurrence by induction. Hence if $J$ denotes the annihilator of $\bar{u}$ then $\Z[S]/J$ is a finitely generated $\Z$-module.  Then for $f\in J$, we have $f\cdot u\in A_0^{Z}$ and satisfies an $S$-quasilinear recurrence and since $|A_0|=p$, it satisfies an $S$-linear recurrence by minimality of $n$.  In particular, for $f\in J$, if we let $J_f$ denote the annihilator of $f\cdot u$ then $\Z[S]/J_f$ is a finitely generated $\Z$-module.
Since $\Z[S]/J$ is a finitely generated $\Z$-module and $S$ is a finitely generated monoid, we have that $J$ is finitely generated as a left ideal.  We let $f_1,\ldots ,f_q$ denote a set of generators of $J$ as a left ideal.  Then by construction the ideal $J':=J_{f_1}f_1+\cdots + J_{f_q} f_q$ annihilates $u$. We claim that $\Z[S]/J'$ is a finitely generated $\Z$-module, from which it will follow that $u$ satisfies an $S$-linear recurrence. Since each $\Z[S]/J_{f_i}$ is a finitely generated $\Z$-module, $L:=J_{f_1} \cdots  J_{f_q}$ has the property that $\Z[S]/L$ is a finitely generated $\Z$-module.  By construction
$I\supseteq Lf_1+\cdots + Lf_q = LJ$ and since $\Z[S]/L$ and $\Z[S]/I$ are both finitely generated $\Z$-modules, so is $\Z[S]/LI$ and thus so is $\Z[S]/I$.  It now follows that $u$ satisfies an $S$-linear recurrence.
 \end{proof}

We require a few more basic facts about recurrences.
\begin{lemma}
Let $A$ be an abelian group, let $S$ be a finitely generated monoid, and let $u=(u_s)_{s\in S}$ be a sequence in $A^S$.  Suppose there is a 
surjective semigroup homomorphism $\Psi: S\to G$ where $G$ is a finite group and let $T$ be the semigroup $\Psi^{-1}(1)$.  
Then $T$ acts on the sets $Z_g:=\Psi^{-1}(g)$ for each $g\in G$.  Suppose that $T$ is finitely generated as a monoid and that for each $g\in G$ $u_g:=(u_{z})_{z\in Z_g}$ satisfies a $T$-linear recurrence.  Then $(u_s)$ satisfies an $S$-linear recurrence.
\end{lemma}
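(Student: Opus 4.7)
The plan is to produce a two-sided ideal $K \subseteq \mathbb{Z}[S]$ that annihilates $u$ and has $\mathbb{Z}[S]/K$ finitely generated as an abelian group; since $\mathrm{Ann}_{\mathbb{Z}[S]}(u) \supseteq K$, it will follow that $\mathbb{Z}[S]/\mathrm{Ann}_{\mathbb{Z}[S]}(u)$ is also finitely generated over $\mathbb{Z}$, which is precisely an $S$-linear recurrence. I would start by assembling a joint annihilator inside $\mathbb{Z}[T]$: for each $g \in G$ let $I_g \subseteq \mathbb{Z}[T]$ be the annihilator of $u_g$, so $\mathbb{Z}[T]/I_g$ is finitely generated over $\mathbb{Z}$ by hypothesis, and set $J := \bigcap_{g \in G} I_g$. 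Since $G$ is finite, $\mathbb{Z}[T]/J$ embeds into $\bigoplus_g \mathbb{Z}[T]/I_g$ and is therefore finitely generated over $\mathbb{Z}$; a quick check using $\tau z \in Z_g$ for $\tau \in T$ and $z \in Z_g$ shows $J \cdot u = 0$ in $A^S$. Since the annihilator of $u$ is two-sided, the ideal $K := \mathbb{Z}[S] \cdot J \cdot \mathbb{Z}[S]$ of $\mathbb{Z}[S]$ generated by $J$ lies in $\mathrm{Ann}_{\mathbb{Z}[S]}(u)$.

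The main step is to prove $\mathbb{Z}[S]/K$ is finitely generated over $\mathbb{Z}$ by a pigeonhole/length-reduction argument. Fix generators $s_1, \ldots, s_k$ of $S$, and using that $T$ is finitely generated as a monoid pick a finite set $t_1, \ldots, t_N \in T$ whose images generate $\mathbb{Z}[T]/J$ as an abelian group and which admit expressions as $S$-words of length at most $L$ for some uniform $L$. Set $M_0 := |G|(L+1)$. Given a word $w = s_{i_1} \cdots s_{i_m}$ with $m \ge M_0$, the prefix images $q_j := \Psi(s_{i_1} \cdots s_{i_j})$ for $0 \le j \le m$ are $m+1 \ge |G|(L+1)+1$ elements of the finite group $G$, so by pigeonhole some value is attained at least $L+2$ times; picking the earliest and latest such positions gives $0 \le a < b \le m$ with $q_a = q_b$ and $b - a \ge L+1$, so the middle block $\mu := s_{i_{a+1}} \cdots s_{i_b}$ satisfies $\Psi(\mu) = q_a^{-1} q_b = 1$ and hence lies in $T$. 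Writing $\mu \equiv \sum_i c_i t_i \pmod{J}$ yields
\[ w = w_1 \mu w_2 \;\equiv\; \sum_{i=1}^N c_i \, w_1 t_i w_2 \pmod{K}, \]
where $w_1 := s_{i_1} \cdots s_{i_a}$ and $w_2 := s_{i_{b+1}} \cdots s_{i_m}$ flank $\mu$, and each summand has an $S$-word expression of length at most $a + L + (m-b) = m - (b-a) + L \le m - 1$. Strong induction on $m$ now shows $\mathbb{Z}[S]/K$ is spanned over $\mathbb{Z}$ by the images of the finitely many words of length $< M_0$, hence is finitely generated.

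The main obstacle is the length-reduction in the second paragraph: the subtlety is to arrange that the middle $T$-block produced by pigeonhole is long enough (at least $L+1$) to absorb the substitution cost $L$ incurred when replacing $\mu$ by the chosen representatives $t_i$, and this is exactly where both finite-generation hypotheses enter, since the finite generation of $T$ as a monoid is what forces the $t_i$ to have a uniform $S$-length bound. Once that bound is in hand the rest is routine, and combining $K \subseteq \mathrm{Ann}_{\mathbb{Z}[S]}(u)$ with the finite generation of $\mathbb{Z}[S]/K$ immediately yields the desired $S$-linear recurrence.
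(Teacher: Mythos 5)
Your proof is correct and takes essentially the same route as the paper: form a joint annihilator $J\subseteq\Z[T]$ with $\Z[T]/J$ finitely generated over $\Z$, pass to the two-sided ideal $K=\Z[S]J\Z[S]\subseteq\operatorname{Ann}(u)$, and use pigeonhole on the $\Psi$-images of prefixes to show $\Z[S]/K$ is spanned by boundedly many words. The only (harmless) deviations are that you take $J=\bigcap_g I_g$ instead of the product $\prod_g I_g$ (thereby bypassing Lemma~\ref{lem:FACTS}) and run a length-reduction induction rather than the paper's bounded-alternation normal form $u_1t_1u_2\cdots u_m$ with $m\le |G|$; note also that your uniform bound $L$ really comes from the finiteness of the spanning set together with finite generation of $S$, not from finite generation of $T$ as you suggest.
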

\begin{proof}  For $g\in G$, we let $I_g\subseteq \Z[T]$ denote the annihilator of $u_g$.  Then by assumption $\Z[T]/I_g$ is a finitely generated $\Z$-module and hence $\Z[T]/J$ is also a finitely generated $\Z$-module by Lemma \ref{lem:FACTS}, where $J:=\prod_{g\in G} I_g$.  By construction, if $f\in J$ then $f$ annihilates each $u_g$ and so, since $T$ acts on each $Z_g$, we have that $f$ annihilates $u$.  It follows that the ideal
$I:=\Z[S]J\Z[S]\subseteq \Z[S]$ is contained in the annihilator of $u$.  To finish the proof, it suffices to show that $\Z[S]/I$ is a finitely generated $\Z$-module.  We claim that there is 
a finite subset $U$ of $S$ such that every element of $S$ can be expressed in the form $u_1 t_1 u_2 t_2 \cdots u_{m-1} t_m u_{m}$, with $m\le |G|$.  To see this, we pick a set of generators $s_1,\ldots ,s_d$ of $S$ and let $U$ denote the set of elements of $S$ that can be expressed as a product of elements in $s_1,\ldots ,s_d$ of length at most $m$.  Then it is immediate that if $s$ is element of $S$, then $s$ has an expression of the form
$u_1 t_1 u_2 \cdots u_{p-1} t_{p-1} u_{p}$ for some $p$. For this element $s$, we pick such an expression with $p$ minimal.  If $p\le |G|$, there is nothing to prove, so we may assume that $p>|G|$.  Then $\Psi(u_1), \Psi(u_1u_2), \ldots , \Psi(u_1\cdots u_{p})$ are $p$ elements of $G$ and hence two of them must be the same.  So there exist $i, j$ with $1\le i<j\le p$ such that
$\Psi(u_1\cdots u_i) = \Psi(u_1\cdots u_j)$ and so $\Psi(u_{i+1}\cdots u_j)=1$.  In particular,
$t:=u_{i+1} t_{i+1} \cdots u_{j-1} t_{j-1} u_j\in T$ and thus we can rewrite $s$ as $u_1 t_1\cdots u_{i-1} (t_i t t_j) u_{j+1}\cdots t_{p-1} u_p$, which contradicts the minimality of $p$ in our expression for $s$.  The claim now follows.

Since $\Z[T]/J$ is a finitely generated $\Z$-module, there exists a finite subset $V$ of $T$ such that $\Z[T]/J$ is spanned by images of elements of $V$.  It follows that $\Z[S]/I$ is spanned as a $\Z$-module by images of elements of the form $u_1 t_1 u_2 t_2 \cdots u_{m-1} t_m u_{m}$ with $u_i\in U$ and $t_i\in V$ and $m\le |G|$.  Thus $\Z[S]/I$ is a finitely generated $\Z$-module and so $(u_s)$ satisfies an $S$-linear recurrence, as required.
\end{proof}
The following result is connected to the classical Skolem-Mahler-Lech theorem \cite{Lech}.
\begin{prop}
\label{recurrence zeros}
Let $A$ be a finitely generated abelian group, let $B\leq A$ be a subgroup, and let $(u_n)_{n\in \mathbb{N}_0}$
be an $A$-valued sequence that satisfies a $\mathbb{N}_0$-linear recurrence. Then $\{n\colon u_n\in B\}$ is eventually periodic.  \end{prop}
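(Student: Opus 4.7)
The plan is to reduce to the classical Skolem--Mahler--Lech theorem by separating the torsion and free parts of $A$. First I would pass to the quotient sequence $\bar u_n := u_n + B$ valued in $A/B$, which is again a finitely generated abelian group. Since the shift action of $\Z[x] = \Z[\N_0]$ commutes with the quotient map $A \to A/B$, the annihilator $I$ of $(u_n)$ is contained in the annihilator of $(\bar u_n)$, so $(\bar u_n)$ still satisfies an $\N_0$-linear recurrence. Because $\{n : u_n \in B\} = \{n : \bar u_n = 0\}$, I may assume $B = 0$ and prove that $\{n : u_n = 0\}$ is eventually periodic.

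Now decompose $A \cong \Z^r \oplus T$ with $T$ finite, and write $u_n = (v_n, w_n)$. Both projections inherit the recurrence for the same functoriality reason as above, and since finite intersections of eventually periodic subsets of $\N_0$ are eventually periodic, the identity $\{n : u_n = 0\} = \{n : v_n = 0\} \cap \{n : w_n = 0\}$ reduces the task to the two components. For the torsion piece, the hypothesis that $R := \Z[x]/I$ is finitely generated as a $\Z$-module forces the image of $x$ in $R$ to be integral over $\Z$, yielding a monic relation $x^k - c_{k-1} x^{k-1} - \cdots - c_0 \in I$ with $c_i \in \Z$. Translating to the sequence and projecting onto $T$, one obtains $w_{n+k} = c_{k-1} w_{n+k-1} + \cdots + c_0 w_n$ in $T$, so the $k$-tuple $(w_n, \ldots, w_{n+k-1})$ evolves by a deterministic self-map of the finite set $T^k$. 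Its forward orbit is therefore eventually periodic, and in particular $\{n : w_n = 0\}$ is eventually periodic.

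For the free piece $v_n \in \Z^r$, each of the $r$ coordinate sequences satisfies the same degree-$k$ integer-coefficient linear recurrence in the classical sense, so by the Skolem--Mahler--Lech theorem over $\Q$ its zero set is a finite union of arithmetic progressions together with a finite set. Intersecting these zero sets over the $r$ coordinates preserves eventual periodicity, giving the conclusion for $v_n$ and hence for $u_n$. The only non-elementary input is Skolem--Mahler--Lech itself; the remainder of the argument is purely structural reduction, and the one subtlety worth double-checking is that the $\Z[x]$-linear recurrence genuinely survives each of the quotients $A \to A/B$, $A \to \Z^r$, and $A \to T$ used above, which follows immediately from functoriality of the shift action on $A^{\N_0}$.
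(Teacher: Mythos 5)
Your proof is correct and follows essentially the same route as the paper: reduce to $B=0$, split $A$ via the structure theorem (you use $\Z^r\oplus T$, the paper uses a direct sum of cyclic groups), handle the free part with Skolem--Mahler--Lech and the torsion part by finiteness of the state space, and intersect the eventually periodic zero sets. Your explicit extraction of a monic relation $x^k-c_{k-1}x^{k-1}-\cdots-c_0\in I$ from the integrality of $x$ in $\Z[x]/I$ is a nice touch, since the paper's treatment of the finite case tacitly assumes such a monic recurrence.
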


\begin{proof}
We may replace $(u_n)$ by the sequence $(u_n+B)$ taking values in the quotient group $A/B$ and thus we may assume without loss of generality that $B=(0)$.
Then $A/B$ is a finitely generated abelian group and hence is a direct sum of cyclic groups.
We write $A/B = C_1\oplus \cdots \oplus C_r$, with each $C_i$ cyclic.  Then $u_n = (u_{1,n},\ldots ,u_{r,n})$ with $(u_{i,n})$
a $C_i$-valued sequence satisfying an $\N_0$-linear recurrence for $i=1,\ldots ,r$.  Then $\{n\colon u_n=0\}$ is just the
intersection of the sets $$N_i:=\{n\colon u_{i,n}=0\}$$ for $i=1,\ldots ,r$.  Since a finite intersection of eventually periodic
subsets of $\mathbb{N}_0$ is again eventually periodic, we may assume without loss of generality that $A$ is cyclic and $B$ is $(0)$.
Now if $A=\mathbb{Z}$ then the result follows immediately from the Skolem-Mahler-Lech theorem \cite{Lech}.  On the other hand, if $A$ is
a finite cyclic group, then the result is immediate: if $u_n$ satisfies the recurrence $u_n+c_1 u_{n-1}+\cdots + c_d u_{n-d}=0$.  Then since $C$ is finite, there exist indices $p$ and $q$ with $d\le p<q$ such that $(u_{p-1},\ldots ,u_{p-d})=(u_{q-1},\ldots ,u_{q-d})$.  The recurrence then gives that $u_{n+p}=u_{n+q}$ for $n\ge 0$, and so $u_n$ is eventually periodic. The result follows. \end{proof}

We now turn our attention to multiplicative recurrences in $K^*$ with $K$ a field.

\begin{prop}
\label{recurrence units}
Let $K$ be a finitely generated extension of $\Q$, and let $(u_n)\in (K^*)^{\mathbb{N}_0}$ be a sequence satisfying a
multiplicative $\N_0$-quasilinear recurrence. Then in fact $(u_n)$ satisfies a (multiplicative) linear recurrence and
if $H$ is a finitely generated subgroup of $K^*$ then $\{n\colon u_n\in H\}$ is eventually periodic.
\end{prop}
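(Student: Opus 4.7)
The plan is to produce a finitely generated subgroup $A\le K^*$ that contains every $u_n$; once this is in hand, both assertions reduce to results already proved in this section. Viewing $A$ as a finitely generated abelian group (written multiplicatively), the hypothesized multiplicative quasilinear recurrence on $(u_n)$ becomes an $\N_0$-quasilinear recurrence on an $A$-valued sequence, so Lemma~\ref{quasilinear implies linear} upgrades it to a multiplicative linear recurrence. For the second claim, any finitely generated $H\le K^*$ meets $A$ in a subgroup $B:=H\cap A$ that is itself finitely generated, and since each $u_n\in A$ one has $\{n:u_n\in H\}=\{n:u_n\in B\}$, which is eventually periodic by Proposition~\ref{recurrence zeros}.

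To construct $A$, I would first unpack the quasilinear hypothesis. Identifying $\Z[\N_0]$ with $\Z[x]$, the unique $M$-fold product of the single generator is $x^M$, so the definition supplies integers $c_1,\ldots,c_M$ with $\gcd(c_1,\ldots,c_M)=1$ such that $\sum_{j=1}^{M}c_j\,x^{M-j+1}$ annihilates $(u_n)$; after shortening $M$ if necessary, I may assume $c_1\ne 0$, which gives the multiplicative relation
\[ u_{n+M}^{c_1}u_{n+M-1}^{c_2}\cdots u_{n+1}^{c_M}=1\qquad\text{for all }n\ge 0. \]
Next I would pick a normal, finitely generated $\Z$-subalgebra $R\subseteq K$ with $\operatorname{Frac} R=K$, and let $T=\{\mathfrak{p}_1,\ldots,\mathfrak{p}_r\}$ denote the finite set of height-one primes of $R$ at which at least one of $u_0,\ldots,u_{M-1}$ has nonzero valuation. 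For any height-one prime $\mathfrak{p}\notin T$, applying $v_{\mathfrak{p}}$ to the displayed relation and inducting on $n$ gives $c_1v_{\mathfrak{p}}(u_{n+M})=0$, and since $c_1\ne 0$ this forces $v_{\mathfrak{p}}(u_n)=0$ for every $n\ge 0$. Choosing any nonzero $f_i\in\mathfrak{p}_i$ for each $i$ and setting $R':=R[f_1^{-1},\ldots,f_r^{-1}]$ yields a finitely generated $\Z$-algebra in which every $u_n$ is a unit; I then take $A:=(R')^*$, which is finitely generated by Roquette's theorem asserting that the unit group of any finitely generated $\Z$-algebra domain is a finitely generated abelian group.

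The hard part is the invocation of Roquette's theorem (a generalized Dirichlet $S$-unit theorem for finitely generated $\Z$-algebras): this is the one external ingredient that supplies the existence of \emph{any} finitely generated subgroup of $K^*$ housing the whole orbit, and without finite generation of $K$ over $\Q$ one can easily construct counterexamples (for instance $u_n=2^{1/2^n}$ in $\overline{\Q}^*$, satisfying $u_{n+1}^2u_n^{-1}=1$). The valuation-propagation step is cleanly seeded by the definition of $T$ (so $v_{\mathfrak{p}}(u_j)=0$ for $0\le j\le M-1$ whenever $\mathfrak{p}\notin T$), and both subsequent applications---Lemma~\ref{quasilinear implies linear} to promote quasilinear to linear, and Proposition~\ref{recurrence zeros} to extract eventual periodicity of $\{n:u_n\in B\}$---are essentially bookkeeping.
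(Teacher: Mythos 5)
Your proposal is correct, and its overall skeleton coincides with the paper's: trap all the $u_n$ inside a single finitely generated subgroup of $K^*$ via Roquette's theorem, then invoke Lemma~\ref{quasilinear implies linear} to upgrade quasilinear to linear, and Proposition~\ref{recurrence zeros} applied to the subgroup $H\cap A$ to get eventual periodicity. Where you differ is in the mechanism of the trapping step. The paper argues group-theoretically: from the relation $u_n^{i_0}\cdots u_{n+d}^{i_d}=1$ (with leading exponent nonzero) every $u_n$ lies in the radical $\sqrt{G}$ of the group $G$ generated by $u_0,\ldots,u_d$, and $\sqrt{G}$ is finitely generated because its elements are integral over $R=\Z[G,t_1,\ldots,t_d]$, hence units of the integral closure $\cl{R}$ of $R$ in $K$, which is again a finitely generated $\Z$-algebra. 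You instead argue valuation-theoretically: in a normal finitely generated model $R$ of $K$ you propagate vanishing of $v_{\mathfrak p}$ along the recurrence to show each $u_n$ has zero valuation at every height-one prime outside a finite exceptional set, and then invert finitely many elements so that all $u_n$ become units of the localization $R'$. Both routes lean on the same external input (Roquette) and on finiteness of integral closure or normalization of finitely generated $\Z$-algebras, so neither is substantially more elementary; yours replaces the radical-plus-integrality argument by an $S$-unit/valuation argument, at the cost of one fact you use silently, namely that a noetherian normal domain is the intersection of its localizations at height-one primes, so that ``zero valuation at every height-one prime of $R'$'' really does force $u_n\in (R')^*$ (this is why your insistence on choosing $R$ normal is essential). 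With that standard fact made explicit, and with your observation that one may shorten the relation to make the leading exponent nonzero (a point the paper's radical argument also implicitly needs), your proof is complete.
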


This is not true without the hypothesis that $K$ is finitely generated as an extension of $\Q$.  For example if $K=\mathbb{C}$ and $u_n = \exp(2\pi i /2^n)$, then $u_n^2 = u_{n-1}$ and so $(u_n)$ satisfies a quasilinear recurrence.  But since $u_n$ is never in the subfield generated by $u_1,\ldots ,u_{n-1}$, $u_n$ does not satisfy a linear recurrence.
\begin{proof}[Proof of Proposition \ref{recurrence units}]
The assumption that $(u_n)$ is a quasilinear recurrence means that there are some $d\geq 0$ and integers $i_0,\ldots,i_d$ with $\gcd(i_0,\ldots,i_d)=1$ so that the following relation holds for all $n\geq 0$:
\[ u_n^{i_0}\cdots u_{n+d}^{i_d}=1. \]
Then if $G$ is the subgroup generated by $u_0,\ldots,u_d$, it follows that $u_n$ lies in the \textit{radical} of $G$ for all $n\geq 0$:
\[ \sqrt{G} := \{g\in K^* : g^m \in G \text{ for some } m\geq 1\}. \]
We will show that $\sqrt{G}$ is finitely generated, from which the desired conclusion follows from Lemma \ref{quasilinear implies linear}.

To see that $\sqrt{G}$ is finitely generated: since $K/\Q$ is finitely generated, $K$ is a finite extension of a function field $L=\Q(t_1,\ldots,t_m)$. Now let $R=\Z[G,t_1,\ldots,t_d]$ be the subring of $K$ generated by $t_1,\ldots,t_d$ and $G$, and let $F:=\mathrm{Frac}(R)$ so that $K/F$ is finite. Finally let $\cl{R}$ denote the integral closure of $R$ in $K$. Then $R$ is a finitely generated $\Z$-algebra, so the same is true for $\cl{R}$ \cite[Corollary 13.13]{Eisenbud}. It follows that the group of units $\cl{R}^{*}$ is finitely generated by Roquette's Theorem \cite{Roquette}. But $\sqrt{G}\leq \cl{R}^{*}$ since every element of $\sqrt{G}$ is integral over $R$.  Thus $\sqrt{G}$ is finitely generated.

Now let $H_0:=H\cap \sqrt{G}$.  Then by Proposition \ref{recurrence zeros} we have $\{n\colon u_n\in H_0\}$ is eventually periodic, and since $u_n\in H$ if and only if $u_n\in H_0$ we obtain the result.
\end{proof}

\smallskip

\section{Multiplicative dependence and $S$-unit equations}\label{sec:MD}

The goal of this section is to establish a key lemma which converts the statement of Theorem \ref{main theorem 2} into a problem about linear recurrences (in the sense of \textsection 2). Thus we may apply the results of \textsection 2 to obtain the main theorem.

\begin{defn}
\label{multiplicatively dependent}
Let $K$ be a field and let $k$ be a subfield of $K$. Elements $a_1,\ldots,a_n\in K^*$ are \textit{multiplicatively dependent modulo} $k^*$ if there are integers $i_1,\ldots,i_n\in \Z$, not all zero, such that $a_1^{i_1}\cdots a_n^{i_n}\in k^*$. If $a_1,\ldots ,a_n\in K^*$ are not multiplicatively dependent modulo $k^*$ then they are \emph{multiplicatively independent modulo} $k^*$.
\end{defn}
Observe that if $k$ is algebraically closed in $K$, the integers $i_0,\ldots,i_d$ in Definition \ref{multiplicatively dependent} can
be chosen to satisfy $\gcd(i_0,\ldots,i_d)=1$. Indeed, if $m=\gcd(i_0,\ldots,i_d)$, then $(i_0,\ldots,i_d)=(mj_0,\ldots,mj_d)$ for some $j_0,\ldots,j_d$, and set $g:=f_0^{j_0}\cdots f_d^{j_d}$. Then $g^m$ is in $k^*$. But then $g\in k^*$ as $k$ is algebraically closed. Since $\gcd(j_0,\ldots,j_d)=1$, this is the required multiplicative dependence modulo $k^*$.

Let $K$ be an algebraically closed field and let $X$ be an irreducible quasiprojective variety over $K$. For a group $G\leq K^{*}$ and rational functions $f_1,\ldots,f_n\in K(X)$, we set
\begin{equation} X_G(f_1,\ldots,f_n):=\{x\in X: f_1(x),\ldots,f_n(x)\in G \} = \bigcap_{i=1}^n f_i^{-1}(G).  \label{eq:XG}\end{equation}

Notice that if $X=\mathbb{A}^n$ and $f_i(x_1,\ldots,x_n):=x_i$ is a coordinate function, then $X_G(f_1,\ldots,f_n)$ is the set $X(G)$ of affine points with coordinates in $G$.  The set $X_G$ has been studied in \cite{BOSS} in the case $X=\mathbb{P}^1$ and $f_0=f_1=f:\mathbb{P}^1\rightarrow \mathbb{P}^1$ is a rational function; they determine exactly the form of such $f$ so that $X_G$ is infinite. Similarly, multiplicative dependence of values of rational functions has been studied in \cite{BOSS2}.

Now we state our key lemma.

\begin{lemma}
\label{dichotomy}
Let $K$ be an algebraically closed field of characteristic zero, let $G$ be a finitely generated multiplicative subgroup of $K^{*}$, let $X$ be an irreducible quasiprojective variety over $K$ of dimension $d$, and let $f_0,\ldots,f_d\in K(X)$ be $d+1$ rational functions on $X$. If $X_G(f_0,\ldots,f_d)$ is Zariski dense in $X$, then $f_0,\ldots,f_d$ are multiplicatively dependent modulo $K^*$.
\end{lemma}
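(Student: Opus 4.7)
The plan is to combine a transcendence-degree bound with the $S$-unit equation theorem of Evertse--Schlickewei--Schmidt for finitely generated multiplicative subgroups in characteristic zero. Since $K(X)$ has transcendence degree $d$ over $K$, the $d+1$ functions $f_0,\ldots,f_d$ are algebraically dependent, so there is a nonzero $P \in K[y_0,\ldots,y_d]$ with $P(f_0,\ldots,f_d) = 0$. Fix such a $P = \sum_{i=1}^r c_i y^{\alpha_i}$ with $r$ the minimum possible number of monomials; here the $\alpha_i \in \Z_{\geq 0}^{d+1}$ are distinct multi-indices, $c_i \in K^*$, and I abbreviate $m_i(x) := \prod_{j=0}^d f_j(x)^{\alpha_{i,j}}$. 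The case $r = 1$ would force some $f_j \equiv 0$, making $X_G$ empty since $G \subseteq K^*$; the case $r = 2$ yields $m_1/m_2 \equiv -c_2/c_1 \in K^*$ on $X$, which (as $\alpha_1 \neq \alpha_2$) is already a nontrivial multiplicative dependence modulo $K^*$. So the task reduces to deriving a contradiction when $r \geq 3$.

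For any $x \in X_G$ the values $f_j(x)$ lie in $G \subseteq K^*$, hence every $m_i(x)$ is a nonzero element of $K$. Dividing the relation by $c_r m_r(x)$ then gives the $S$-unit equation
\[ \sum_{i=1}^{r-1} \left(-\frac{c_i}{c_r}\right) \frac{m_i(x)}{m_r(x)} = 1, \]
with each summand in the finitely generated subgroup $H := \langle G, c_1/c_r, \ldots, c_{r-1}/c_r \rangle$ of $K^*$. The central input is the $S$-unit theorem: only finitely many tuples $(x_1,\ldots,x_{r-1}) \in H^{r-1}$ with $\sum x_i = 1$ admit no vanishing proper subsum. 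Consequently every $x \in X_G$ lies in the closed subvariety $W_a := \{y \in X : m_i(y)/m_r(y) = -a_i c_r/c_i \text{ for } i = 1,\ldots,r-1\}$ for one of the finitely many nondegenerate tuples $a$, or else in some $V_I := \{y \in X : \sum_{i \in I} c_i m_i(y) = 0\}$ corresponding to a nonempty proper subset $I \subsetneq \{1,\ldots,r-1\}$.

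Thus $X_G$ is contained in a finite union $\bigcup_a W_a \cup \bigcup_I V_I$ of closed subvarieties of $X$. Since $X_G$ is Zariski dense and $X$ is irreducible, one of these subvarieties must equal $X$. If some $V_I = X$, then $\sum_{i \in I} c_i m_i \equiv 0$ on $X$ is a polynomial relation among $f_0,\ldots,f_d$ with at most $r-2$ monomials, contradicting minimality of $r$. Hence some $W_a = X$, meaning each $m_i/m_r$ is identically the constant $-a_i c_r/c_i \in K^*$; since $\alpha_1 \neq \alpha_r$, the identity $m_1/m_r \in K^*$ is the required nontrivial multiplicative dependence of $f_0,\ldots,f_d$ modulo $K^*$.

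The main obstacle is invoking the $S$-unit theorem in sufficient generality: one needs the version valid for an arbitrary finitely generated $H \leq K^*$ in a characteristic-zero field, not just $S$-units in a number field. This is the theorem of Evertse, van der Poorten--Schlickewei, and Schmidt. Once that is in hand, the remainder is the natural combination of minimality of $r$ (ruling out the ``degenerate'' subvarieties $V_I$) with Zariski density and irreducibility of $X$ (upgrading the set-theoretic covering into an equality of subvarieties).
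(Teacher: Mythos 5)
Your proof is correct and follows essentially the same route as the paper's proof of Lemma \ref{dichotomy}: algebraic dependence of $f_0,\ldots,f_d$ coming from the transcendence-degree bound, specialization at points of $X_G$ to a unit equation over the finitely generated group generated by $G$ and the coefficients, the finiteness theorem of Evertse--Schlickewei--Schmidt \cite{ESS}, and finally Zariski density plus irreducibility to force a ratio $f^{\alpha-\beta}$ to be a constant in $K^*$. The only genuine difference is the bookkeeping for degenerate solutions: you take a relation with minimal monomial support so that any identically vanishing subsum contradicts minimality, whereas the paper partitions $X_G$ according to the vanishing-subsum pattern and pigeonholes a Zariski-dense piece; both devices accomplish the same reduction.
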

\begin{proof}
Since the field extension $K(X)/K$ has transcendence degree $d$, the functions $f_0,\ldots,f_d$ must be algebraically dependent over $K$. Thus there is a polynomial relation
\[ \sum_{i_0,\ldots,i_d} c_{i_0\cdots i_d}f_0^{i_0}\cdots f_d^{i_d}=0 \]
where $c_{i_0\cdots i_d}\in K$ and the sum is over a finite set of indices in $\N_0^{d+1}$; this holds on some open subset of $X$. To simplify notation, let $I$ be the (finite) set of those indices $\alpha=(i_0,\ldots,i_d)\in \N_0^{d+1}$ where $c_{i_0\cdots i_d}$ is nonzero. For $\gamma=(i_0,\ldots,i_d)\in \Z^{d+1}$, we set \[ f^\gamma:=f_0^{i_0}\cdots f_d^{i_d}. \] Then for every $y\in X_G$, the $I$-tuple $(c_\alpha f^\alpha(y))_{\alpha\in I}$ is a solution to the $S$-unit equation
\[ \sum_{\alpha\in I} X_\alpha = 0 \]
in the group $\cl{G}$ generated by $G\cup \{c_\alpha:\alpha\in I\}$. This tuple may be \textit{degenerate} in the sense that some subsum vanishes, so we partition it into nondegenerate subtuples. Thus, for each partition $\pi\vdash I$, say $\pi=\{I_1,\ldots,I_m\}$, we let $X_{G,\pi}$ be the set of points $y\in X_G$ such that, for each $s=1,\ldots,m$, the $I_s$-tuple $(c_\alpha f^\alpha(y))_{\alpha\in I_s}$ is nondegenerate (\textit{i.e.} its sum vanishes, but no subsum vanishes). 
Note that there is a decomposition $X_G=\bigcup_{\pi\,\vdash I}X_{G,\pi}$ and hence there is some partition $\pi$ of $I$ such that
$X_{G,\pi}$ is Zariski dense in $X$.  Notice $X_{G,\pi}$ is empty if $\pi$ has some part of size $1$, and hence if $\pi=(I_1,\ldots ,I_m)$ then each $I_k$ has size at least two since $c_{\alpha}f^{\alpha}(y)\neq 0$ for $\alpha\in I$ and $y\in X_G$.
Thus there exist $\alpha,\beta$, two distinct indices in the same component $I_s$ of $\pi$.

By the Main Theorem on $S$-unit equations  \cite{ESS}, an $S$-unit equation in characteristic zero has only finitely many nondegenerate solutions up to scalar
multiplication \cite{ESS}. Let $(t_{1,\mu})_{\mu \in I_s}, \ldots, (t_{n,\mu})_{\mu \in I_s}$ be all solutions to the equation $$\sum_{\mu\in I_s} t_{\mu} = 0$$ up to scaling. Then for each $y\in X_{G,\pi}$, we know that $(c_\mu f^\mu(y))_{\mu\in I_s}$ is a multiple of some $(t_{j,\mu})_{\mu\in I_s}$,
so there is some $g\in \cl{G}$ such that
\[ c_\mu f^\mu(y) = g t_{j,\mu} \qquad \text{for all } \mu\in I_s. \]
Here $g,j$ may depend on $y$. But then for $\alpha,\beta\in I_s$, we can take quotients to get
\[ f^{\alpha-\beta}(y) = \frac{c_\beta t_{j,\alpha}}{c_\alpha t_{j,\beta}} \]
and there are only finitely many possible values for the right-hand side of this equation, independent of $y$.
Taking $\gamma=\alpha-\beta$, we have $f^{\gamma}(y)$ takes only finitely many values for $y \in X_{G,\pi}$.
Since $X$ is irreducible and $f^{\gamma}$ is constant on $X_{G,\pi}$, which is Zariski dense in $X$, we have $f^{\gamma}\in K^*$, which completes the proof.
\end{proof}

\subsection{Interpolation of $G$-valued orbits as recurrences.} Now we enter the setup of our semigroup-version of Theorem \ref{main theorem 2}.  We find it convenient to fix the following assumptions and notation for the remainder of this section.
\begin{notn}
We introduce the following notation:
\begin{enumerate}
\item we let $K$ be an algebraically closed field of characteristic zero;
\item we let $G$ be a finitely generated subgroup of $K^*$;
\item we let $X$ be an irreducible quasiprojective variety over $K$;
\item we let $\varphi_1,\ldots ,\varphi_m$ be rational self-maps of $X$ and we let $S$ denote the monoid generated by these maps under composition;
\item we let $S^{\rm op}$ denote the \emph{opposite semigroup}, which is, as a set, just $S$ but with multiplication $\star$ given by $\mu_1 \star \mu_2 = \mu_2\circ \mu_1$;
\item we let $f:X\to \mathbb{P}^1$ be a non-constant rational map;
\item we assume that $x_0\in X$ has the property that its forward orbit under $S$ is Zariski dense and each point avoids the indeterminacy loci of the maps $\varphi_1,\ldots ,\varphi_m$ and $f$.
\end{enumerate}
\label{notn1}
\end{notn}

With these data fixed, we may thus define a sequence $u$ in $ K^S$ by
\[ u_{\varphi}:=f(\varphi(x_0)). \]
Notice that the semigroup algebra $\mathbb{Z}[S^{\rm op}]$ acts on $K^S$ via the rule $$\varphi\cdot (v_{\mu})_{\mu\in S} = (v_{\mu\circ \varphi})_{\mu\in S}$$ for $\varphi\in S$.
In this section, we analyze the case when $u_{\varphi}\in G$ for every $\varphi\in S$.

\begin{prop}
\label{multiplicative linear recurrence}
Adopt that assumptions and notation of Notation \ref{notn1}. If $f(\varphi(x_0))\in G$ for every $\varphi\in S$ then $(f\circ\varphi(x_0))_{\varphi\in S^{\rm op}}$ satisfies a multiplicative $S^{\rm op}$-linear recurrence.
\end{prop}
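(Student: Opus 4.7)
The plan is to identify the annihilator ideal $I$ of $u = (f\circ\varphi(x_0))_{\varphi\in S^{\mathrm{op}}}$ in $\Z[S^{\mathrm{op}}]$ with the kernel of a natural homomorphism $\pi\colon \Z[S^{\mathrm{op}}]\to K(X)^{*}$, and then use Lemma \ref{dichotomy} together with standard facts about divisors on projective varieties to show that $\Z[S^{\mathrm{op}}]/I$ is finitely generated as a $\Z$-module, which is exactly the desired $S^{\mathrm{op}}$-linear recurrence.

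First I would define $\pi$ to be the group homomorphism from the additive group $\Z[S^{\mathrm{op}}]$ into the multiplicative group $K(X)^{*}$ sending $\sigma\mapsto f\circ\sigma$ for $\sigma\in S$, so that $\pi\bigl(\sum_{k}c_{k}\sigma_{k}\bigr)=\prod_{k}(f\circ\sigma_{k})^{c_{k}}$. Unpacking the $\Z[S^{\mathrm{op}}]$-action yields the identity $(\alpha\cdot u)_\mu = \pi(\alpha)(\mu(x_0))$ for every $\mu\in S$. Thus $\alpha$ annihilates $u$ iff $\pi(\alpha)$ vanishes identically on the orbit $O_S(x_0)$, and by Zariski density of the orbit in the irreducible variety $X$ this is equivalent to $\pi(\alpha)=1$ in $K(X)^{*}$. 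Hence $I=\ker\pi$, yielding an isomorphism of abelian groups $\Z[S^{\mathrm{op}}]/I\cong H$, where $H:=\langle f\circ\sigma:\sigma\in S\rangle\le K(X)^{*}$. The task therefore reduces to showing that $H$ is a finitely generated abelian group.

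For this I would use the short exact sequence $1\to H\cap K^{*}\to H\to \bar H\to 1$, where $\bar H$ is the image of $H$ in $K(X)^{*}/K^{*}$. Every element of $H\cap K^{*}$ is a constant of the form $\prod_{k}(f\circ\sigma_{k})^{c_{k}}$ which, evaluated at $x_{0}$, equals $\prod_{k}u_{\sigma_{k}}^{c_{k}}\in G$; so $H\cap K^{*}\le G$ is finitely generated by hypothesis. For the quotient $\bar H$: Lemma \ref{dichotomy}, applied to any $d+1$ of the generators $\{f\circ\sigma\}$ (the relevant $X_G$ contains the Zariski-dense orbit, since each $\sigma_i\circ\mu\in S$), shows that any $d+1$ generators of $\bar H$ are multiplicatively dependent modulo $K^{*}$, so $\bar H$ has rank at most $d$. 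Fixing a normal projective compactification $\bar X$ of $X$, the divisor map embeds $K(X)^{*}/K^{*}$ into the free abelian group $\mathrm{Div}(\bar X)$; a subgroup of a free abelian group of finite rank is itself free of finite rank, so $\bar H\cong\Z^{r}$ with $r\le d$ is finitely generated. An extension of finitely generated abelian groups is finitely generated, so $H$ is too.

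The step I expect to require the most care is the identification $I=\ker\pi$: one must propagate the pointwise vanishing of $\pi(\alpha)$ on $O_S(x_0)$ to an identity in $K(X)^{*}$, which uses both the irreducibility of $X$ and the hypothesis that every orbit point avoids the indeterminacy loci of $f$ and of all elements of $S$ (so that $\pi(\alpha)$ is defined and nonvanishing on the orbit). After this identification the remainder of the argument is essentially structural, sandwiching $H$ between the finitely generated group $G$ and the finite-rank subgroup $\bar H$ of $K(X)^{*}/K^{*}$ supplied by the dichotomy lemma; this bypasses the need to explicitly invoke Lemma \ref{quasilinear implies linear}, though one could alternately derive a quasilinear recurrence from the same multiplicative dependencies.
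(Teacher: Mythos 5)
Your proof is correct, but it takes a genuinely different route from the paper's. The paper first passes to the image $v$ of $u$ modulo constant sequences, uses Lemma \ref{dichotomy} (with the gcd-one normalization) to produce an $S^{\rm op}$-quasilinear recurrence for $v$, upgrades this to a linear recurrence via the Fatou-type Lemma \ref{quasilinear implies linear}, and then recovers a recurrence for $u$ itself by the $(\varphi_j-1)$ trick and an induction on composition length. You instead identify $\Z[S^{\rm op}]/\mathrm{Ann}(u)$ with the subgroup $H=\langle f\circ\sigma:\sigma\in S\rangle$ of $K(X)^*$ and prove directly that $H$ is a finitely generated abelian group: Lemma \ref{dichotomy} bounds the rank of the image of $H$ in $K(X)^*/K^*$ by $\dim X$, evaluation at $x_0$ traps $H\cap K^*$ inside $G$, and the divisor map on a normal projective model converts finite rank into finite generation. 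This bypasses Lemma \ref{quasilinear implies linear} and the mod-constants bootstrapping entirely, and it hands you for free the structural picture (an extension of a free group of rank at most $\dim X$ by a subgroup of $G$) that Corollary \ref{cor:semi} and Remark \ref{rem:semi} later extract; the price is the geometric input (a normal projective model, injectivity of ${\rm div}$ modulo constants, the fact that subgroups of free abelian groups are free), whereas the paper's route stays inside its general semigroup-recurrence machinery, which it reuses elsewhere (e.g.\ in Lemma \ref{lem:un} and Proposition \ref{recurrence units}).

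Two small points to tidy. First, your identity $(\alpha\cdot u)_\mu=\pi(\alpha)(\mu(x_0))$ presumes the action in which $\sigma$ sends the index $\mu$ to $\sigma\circ\mu$; the rule displayed in the paper is written with the opposite composition order, but the convention you use is the one the paper's own proof and the proof of Corollary \ref{cor:semi} actually rely on, so this is a matter of stating the convention explicitly rather than a gap. Second, ${\rm Div}(\bar X)$ has infinite rank, so phrase the final step as: a subgroup of a free abelian group is free, and a free abelian group in which every $d+1$ elements are dependent has rank at most $d$; alternatively, after choosing a maximal independent set $h_1,\ldots,h_r\in\bar H$, every element of $\bar H$ has divisor supported on the finite union of the supports of the $h_i$, so $\bar H$ embeds in a finite-rank lattice and is therefore finitely generated.
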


\begin{proof} We let $C\subseteq (K^*)^S$ denote the set of constant sequences and let $v$ denote the image of $u$ in $K^S/C$.  We first show that $v$ satisfies an $S^{\rm op}$-quasilinear recurrence.  Let $d$ denote the dimension of $X$ and
let $\varphi_{i_1}\circ \cdots \circ \varphi_{i_{d+1}}$ be a $(d+1)$-fold composition of $\varphi_1,\ldots ,\varphi_m$, let
$\mu_j:=\varphi_{i_j}\circ \cdots \circ \varphi_{i_{d+1}}$ for $j=1,2,\ldots ,d+1$, and let $f_j=f\circ\mu_j$.  Then we take $X_G:=X_G(f_1,\ldots,f_{d+1})$, as in Equation (\ref{eq:XG}). The assumption that $u_{\varphi}\in G$ for $\varphi\in S$ implies that $X_G$ contains the orbit of $x_0$ under $S$, which is dense. Thus $\cl{X_G}=X$ and it follows from Lemma \ref{dichotomy} that $f_1,\ldots,f_{d+1}$ are multiplicatively dependent modulo $K^*$.  Hence
\[ f_1^{p_1}\cdots f_{d+1}^{p_{d+1}}\equiv c \]
where $c\in K^*$ is a constant and $p_1,\ldots,p_{d+1}\in \Z$ with $\gcd(p_1,\ldots ,p_{d+1})=1$.
Now evaluating this at $\varphi(x_0)$ gives
\[  u_{\varphi\star \mu_1}^{p_1}\cdots u_{\varphi\star \mu_{d+1}}^{p_{d+1}}=c \quad \text{for all $\varphi\in S$}. \]
In particular, $p_1\mu_1+\cdots + p_{d+1} \mu_{d+1}\in \Z[S^{\rm op}]$ annihilates $v$.  It follows that $v$ satisfies an $S^{\rm op}$-quasilinear recurrence and it now follows from Lemma \ref{quasilinear implies linear} that it satisfies an $S^{\rm op}$-linear recurrence.
We now claim that $u$ satisfies an $S^{\rm op}$-linear recurrence.  To see this, let $I\subseteq \Z[S^{\rm op}]$ denote the annihilator of $v$.  Then we have shown that $R:=\Z[S^{\rm op}]/I$ is a finitely generated $\Z$-module.  In particular, there exists some $M$ such that $R$ is spanned as a $\Z$-module by compositions of $\varphi_1,\ldots ,\varphi_m$ of length at most $M$.  Now let $J$ denote the annihilator of $u$.  We claim that $ \Z[S^{\rm op}]/J$ is spanned as a $\Z$-module by compositions of length at most $M+1$, which will complete the proof that $u$ satisfies an $S^{\rm op}$-linear recurrence.
So to show this, let $\varphi$ be a composition of length $\ell\ge M+1$.  We shall show by induction on $\ell$ that $\varphi$ is equivalent mod $J$ to a $\Z$-linear combination of compositions of $\varphi_1,\ldots ,\varphi_m$ of length $M+1$, with the base case $\ell=M+1$ being immediate.  So suppose that the claim holds whenever $\ell<q$ and consider the case when $\ell=q$.  Then we can write $\varphi = \mu\circ \varphi_j$ for some $\mu$ that is a composition of length $q-1$ and some $j\in \{1,\ldots ,m\}$.
Then $\mu \equiv \sum m_j \mu_j~(\bmod~I)$, where the $m_j$ are integers and the $\mu_j$ are all compositions of $\varphi_1,\ldots ,\varphi_m$ of length at most $M$.
In particular, $(\mu-\sum m_j\mu_j)\cdot u \in C$ and so $(\varphi_j-1)\star (\mu-\sum m_j\mu_j)\cdot u = 0$.  Hence
$$\mu\circ \varphi_j \equiv \mu - \sum_j m_j (\mu_j\circ \varphi_j - \mu_j)~(\bmod ~J).$$
By the induction hypothesis the right-hand side is equivalent mod $J$ to a $\Z$-linear combination of compositions of $\varphi_1,\ldots ,\varphi_m$ of length $M+1$, and so we now obtain the result.
\end{proof}

Proposition \ref{multiplicative linear recurrence} gives a combinatorial description of the sequence $ (u_\phi)_{\phi \in S}$. We now give a more geometric interpretation of this result.

\begin{cor}
\label{cor:semi}
Adopt the assumptions and notation of Notation \ref{notn1}.  If $f\circ \varphi(x_0)\in G$ for every $\varphi$ in the monoid $S$ generated by $\varphi_1,\ldots ,\varphi_m$ then there exist a dominant rational map $\Theta:X\dashrightarrow \mathbb{G}_m^d$ that is defined at each point in $O_{\varphi}(x_0)$ and endomorphisms $\Phi_1,\ldots ,\Phi_m:\mathbb{G}_m^d\rightarrow \mathbb{G}_m^d$ such that the following diagram commutes
\begin{center}
    \begin{tikzpicture}[node distance=2.7cm, auto]
        \node (X1) at (0,0) {$X$};
        \node (X2) at (3,0) {$X$};
        \node (Ad1) at (0,-2) {$\mathbb{G}_m^d$};
        \node (Ad2) at (3,-2) {$\mathbb{G}_m^d.$};

        \draw[->,dashed] (X1) to node  {$\varphi_1,\ldots ,\varphi_m$} (X2);
        \draw[->,dashed] (X1) to node [swap]{$\Theta$} (Ad1);
        \draw[->,dashed] (X2) to node {$\Theta$} (Ad2);
        \draw[->] (Ad1) to node [swap] {$\Phi_1,\ldots ,\Phi_m$} (Ad2);
    \end{tikzpicture}
\end{center}

Moreover, $f=g\circ \Theta$, where $g:\mathbb{G}_m^d \to \mathbb{G}_m$ is a map of the form
\[ g(t_1,\ldots,t_d) = Ct_1^{i_1}\cdots t_d^{i_d}\]
for some $i_1,\ldots,i_d\in \Z$.
\end{cor}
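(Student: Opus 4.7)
The plan is to translate the multiplicative linear recurrence of Proposition~\ref{multiplicative linear recurrence} into the desired geometric structure, and to use Laurent's theorem (the Mordell--Lang conjecture for algebraic tori in characteristic zero) to upgrade the resulting map to a dominant one.

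Let $H \leq K(X)^*$ be the subgroup generated by $K^*$ together with all pullbacks $\{f\circ\varphi : \varphi \in S\}$, and set $\bar H := H/K^*$. By the Zariski density of the orbit of $x_0$, a multiplicative relation $\prod_i (f\circ\mu_i)^{c_i} \in K^*$ as rational functions on $X$ is equivalent to $\sum_i c_i \mu_i$ annihilating the sequence $v := u \bmod K^*$ from the proof of Proposition~\ref{multiplicative linear recurrence}. Hence the natural surjection $\mathbb{Z}[S^{\mathrm{op}}] \twoheadrightarrow \bar H$ sending $\sum_i c_i \mu_i$ to $\prod_i \overline{f\circ\mu_i}^{\,c_i}$ identifies $\bar H$ with $\mathbb{Z}[S^{\mathrm{op}}]/\operatorname{Ann}(v)$, which is finitely generated as a $\mathbb{Z}$-module by that proposition. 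Since $K(X)^*/K^*$ is torsion-free (after replacing $X$ by a smooth birational model if necessary), $\bar H$ is a free abelian group of some finite rank $d$.

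Fix a $\mathbb{Z}$-basis $\bar h_1,\ldots,\bar h_d$ of $\bar H$, lift to $h_1,\ldots,h_d \in K(X)^*$, and set $\Theta := (h_1,\ldots,h_d) : X \dashrightarrow \mathbb{G}_m^d$. Each $h_j$ is a product of functions of the form $f\circ\mu$ together with a constant from $K^*$, so for every $\varphi \in S$ the coordinates $h_j(\varphi(x_0))$ lie in a single finitely generated subgroup $G' \supseteq G$ of $K^*$; consequently $\Theta$ is defined on $O_\varphi(x_0)$ and takes values in $\mathbb{G}_m^d(K)$. Because $\bar H$ is preserved under precomposition by each $\varphi_i$ (as $h_j\circ\varphi_i$ is again a product of $f\circ\mu$'s times a constant), we obtain relations $h_j\circ\varphi_i = c_{j,i}\prod_k h_k^{a^{(i)}_{j,k}}$ with $c_{j,i}\in K^*$; these assemble into endomorphisms $\Phi_i : \mathbb{G}_m^d \to \mathbb{G}_m^d$ for which $\Theta\circ\varphi_i = \Phi_i\circ\Theta$ holds by construction. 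Taking $\mu = \mathrm{id}_X \in S$ shows $f \in H$, so $f = C\prod_j h_j^{i_j}$ for some $C\in K^*$ and integers $i_j$, yielding the monomial map $g$.

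The main obstacle will be establishing that $\Theta$ is dominant. Let $Y := \overline{\Theta(X)} \subseteq \mathbb{G}_m^d$; since $\Theta(O_\varphi(x_0))$ is Zariski dense in $Y$ and is contained in the finitely generated subgroup $(G')^d$ of $\mathbb{G}_m^d(K)$, Laurent's theorem implies that $Y$ is a finite union of translates of subtori. Irreducibility of $X$ forces $Y$ to be a single such translate $\gamma\cdot T_0$ for a subtorus $T_0 \leq \mathbb{G}_m^d$. If $T_0$ were proper then $Y$ would satisfy a nontrivial monomial equation $\prod_j t_j^{a_j} = c$ with $(a_j)\neq 0$ and $c\in K^*$, forcing $\prod_j h_j^{a_j}$ to be a constant on $X$, which contradicts the freeness of the basis $\bar h_1,\ldots,\bar h_d$ in $\bar H$. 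Hence $T_0 = \mathbb{G}_m^d$ and $\Theta$ is dominant, completing the argument.
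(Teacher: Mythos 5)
Your argument is correct, and it reaches the corollary by a genuinely different organization than the paper, although both rest on the same two pillars: Proposition \ref{multiplicative linear recurrence} and Laurent's theorem. The paper takes $\Theta$ to have one coordinate $f\circ\varphi$ for each composition $\varphi$ of length less than the $M$ supplied by the recurrence, so its $\Theta$ lands in a large torus $\mathbb{G}_m^L$ and is typically not dominant; Laurent's theorem is then applied to the Zariski closure $Y$ of the image of the orbit to see that $Y$ is a coset of a subtorus, and the corollary's $\mathbb{G}_m^d$ is $Y$ itself, with the $\Phi_i$ and the monomial map $g$ obtained by restricting the ambient monomial maps and the projection to $Y$. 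You instead pass to the subgroup $\bar H\le K(X)^*/K^*$ generated by the classes of the pullbacks $f\circ\mu$, use Zariski density of the orbit to identify relations in $\bar H$ with elements of $\operatorname{Ann}(v)$ (strictly, the statement of Proposition \ref{multiplicative linear recurrence} concerns $u$ rather than $v$, but $\operatorname{Ann}(u)\subseteq\operatorname{Ann}(v)$, or one can quote the first half of its proof, so finite generation of $\bar H$ follows either way), note that $K(X)^*/K^*$ is torsion-free simply because $K$ is algebraically closed in $K(X)$ (no smooth birational model is needed, and replacing $X$ would in any case jeopardize the claim that $\Theta$ is defined along the orbit), and take a basis of $\bar H$ as the coordinates of $\Theta$. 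Dominance then genuinely requires an argument, since multiplicative independence of $h_1,\ldots,h_d$ modulo constants does not by itself make $(h_1,\ldots,h_d)$ dominant (consider $(t,t+1)$ on $\mathbb{G}_m$); your appeal to Laurent's theorem — the orbit image lies in a finitely generated subgroup, so $\overline{\Theta(X)}$ is a coset of a subtorus, which independence of the basis forces to be all of $\mathbb{G}_m^d$ — is exactly the right fix, and it uses Laurent in a different role than the paper does. What each approach buys: yours produces the torus of minimal dimension $d=\operatorname{rank}\,\bar H$ and makes the semiconjugacies $\Theta\circ\varphi_i=\Phi_i\circ\Theta$ and the factorization $f=g\circ\Theta$ transparent (they are just re-expansions in the chosen basis), while the paper's construction keeps track of where the multiplicative constants live, which is what yields the refinement of Remark \ref{rem:semi} (constants and the coordinates of $\Theta(x_0)$ in $G$); your construction a priori only places them in $K^*$, respectively in the larger group $G'$, which is all the corollary as stated requires.
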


\begin{proof}
We let $S^{\rm op}$ denote the opposite monoid of $S$.  Then by Proposition \ref{multiplicative linear recurrence} the sequence $u:=(f\circ \varphi(x_0))_{\varphi\in S}\in G^{S}$ satisfies a multiplicative $S^{\rm op}$-linear recurrence.  
It follows that there is some $M$ such that every $M$-fold composition of $\varphi_1,\ldots ,\varphi_m$ is congruent, modulo the annihilator of $u$, to a $\Z$-linear combination of $j$-fold compositions of these endomorphisms, as $j$ ranges over numbers $<M$.
Let $W$ denote the set of $j$-fold compositions of $\varphi_1,\ldots ,\varphi_m$ with $j<M$.
Then we construct a rational map $\Theta:X\dashrightarrow \mathbb{G}_m^L$, where $L=|W|$, given by
$\Theta(x) = (f\circ \varphi(x))_{\varphi\in W}$.  Now let $i\in \{1,\ldots ,m\}$ and consider
$\Theta(\varphi_i(x))=(f\circ \varphi\circ \varphi_i(x))_{\varphi\in W}$.  By construction $f=\pi\circ \Theta$, where $\pi$ is a suitable projection.

Then for $\varphi\in W$ and $i\in \{1,\ldots ,m\}$, $ \varphi\circ \varphi_i$ either remains in $W$ or it is an $M$-fold composition of $\varphi_1,\ldots ,\varphi_m$, in which case the fact that $u$ satisfies an $S^{\rm op}$-linear recurrence gives that
there exist integers $p_{\mu}$ for each $\mu\in W$ such that $$f\circ \varphi\circ \varphi_i(x) = \prod_{\mu\in W} (f\circ \mu(x))^{p_w}$$ for all $x$ in the $S$-orbit of $x_0$.  In particular, since the $S$-orbit of $x_0$ is Zariski dense in $X$,
$\Theta\circ \varphi_i= \Psi_i\circ \Theta$ for some self-map $\Psi_i$ of $\mathbb{G}_m^L$ of the form $$(u_1,\ldots ,u_L)\mapsto \left(\prod_j u_j^{p_{1,j}},\ldots ,\prod_j u_j^{p_{L,j}}\right).$$  In particular, each $\Psi_i$ is a group endomorphism of the multiplicative torus.
Now let $Y$ denote the Zariski closure of the $S$-orbit of $x_0$ under $\Theta$.  Then by construction $Y$ has a Zariski dense set of points in $G^L\le \mathbb{G}_m^L$ and is irreducible.  Then a theorem of Laurent \cite[Th\'eor\`eme 2]{Laurent} gives that $Y$ is a translate of a subtorus of $\mathbb{G}_m^L$.  In particular, $Y\cong \mathbb{G}_m^d$ for some $d\le {\rm dim}(X)$ and $\Psi_1,\ldots ,\Psi_d$ restrict to endomorphisms of $Y$.   Moreover, since $Y$ is a translation of a subtorus, the restriction of $\pi$ to $Y$ induces a map $g:\mathbb{G}_m^d\to \mathbb{G}_m$ of the form $g(u_1,\ldots ,u_d)\mapsto Cu_1^{q_1}\cdots u_d^{q_d}$.  The result now follows.
\end{proof}
\begin{remark}\label{rem:semi}
In fact, it can be observed that $\Theta(x_0)\in G^d$ and that $\Psi_i$ induce maps of $\mathbb{G}_m^d$ of the form $$(x_1,\ldots ,x_d)\mapsto (\lambda_1 x_1^{p_{1,1}}\cdots x_d^{p_{1,d}}, \ldots ,
\lambda_d x_1^{p_{d,1}}\cdots x_d^{p_{d,d}})$$ with $\lambda_1,\ldots ,\lambda_d\in G$; finally, $g(x_1,\ldots ,x_d)\mapsto Cx_1^{q_1}\cdots x_d^{q_d}$ with $C\in G$.
\end{remark}
 The following example shows that the conclusion to Corollary \ref{cor:semi} does not necessarily hold if $K$ has positive characteristic.
\begin{example}
Let $K=\bar{\mathbb{F}}_p(u)$ and let $X=\mathbb{P}^1_K$.  Then we have a map $\varphi:X\to X$ given by $t\mapsto t^p+1$
and let $f:X\to \mathbb{P}^1$ be the map $f(t)=t$.  Notice that if we take $x_0=u$ then $f\circ \varphi^n(u) = u^{p^n}+n = u^{p^n} (1+n/u)^{p^n}$ and hence $\varphi^n(u)$ lies in the finitely generated subgroup $G$ of $K^*$ generated by
$u$ and $1+n/u$ for $n=1,2,\ldots ,p-1$.  Then if the conclusion to Corollary \ref{cor:semi} held, we would necessarily have $d=1$ since $\Theta$ is dominant and $f\circ \varphi^n(u)$ has infinite orbit.  Thus the function fields of $\mathbb{P}^1$ and $\mathbb{G}_m^d$ are both isomorphic to $K(t)$ and the commutative diagram given in the statement of Corollary \ref{cor:semi} would give rise to a corresponding diagram at the level of functions fields:
\begin{center}
    \begin{tikzpicture}[node distance=1.7cm, auto]
        \node (X1) {$K(t)$};
        \node (X2) [right of=X1] {$K(t)$};
        \node (Ad1) [below of=X1] {$K(t)$};
        \node (Ad2) [below of=X2] {$K(t)$};

        \draw[->,dashed] (X1) to node {$\Phi^*$} (X2);
        \draw[->,dashed] (X1) to node [swap] {$\Theta^*$} (Ad1);
        \draw[->,dashed] (X2) to node {$\Theta^*$} (Ad2);
        \draw[->,dashed] (Ad1) to node [swap] {$\varphi^*$} (Ad2);
    \end{tikzpicture}
\end{center}
with $\varphi^*(t)=t^p+1$ and $\Phi^*(t) = C t^a$ for some integer $a$ and some $C\in K^*$.  Moreover, $f^*=\Theta^*\circ g^*$ and since $f^*$ is the identity map of $K(t)$, $\Theta^*$ and $g^*$ are automorphisms of $K(t)$; since $g^*(t)=C' t^b$ for some integer $a$ and some $C'\in K^*$, we have $b=\pm 1$, and so $\Theta^*(t) =C'^{-b} t^b$.
But now $\Theta^*\circ \Phi^*(t) = (C')^{-ab} C t^{ab}$ and $\varphi^*\circ \Theta^*(t) = (C')^{-b} (t^p+1)^{-b}$, and so the two sides do not agree.
 \label{exam}
\end{example}

\begin{proof}[Proof of Corollary \ref{orbit}]
 For each $n\ge 1$, we let $X_{\ge n}$ denote the Zariski closure of $\{ \varphi^m(x_0)\colon m\ge n\}$.  Since the $X_{\ge i}$ form a descending chain of closed sets and since $X$ endowed with the Zariski topology is a 
 noetherian topological space, there is some $m$ such that $X_{\ge m}=X_{\ge m+1}=\cdots $.  We let $Y=X_{\ge m}$ and we let $Z_1,\ldots ,Z_r$ denote the irreducible components of $Y$.  By our choice of $m$, $\varphi$ induces a dominant rational self-map of $Y$ and in particular there is 
some permutation $\sigma$ of $\{1,\ldots ,r\}$ such that
$\varphi(Z_i)$ is Zariski dense in $Z_{\sigma(i)}$.  It follows that there is some $L$ such that $\varphi^L$ maps each $Z_i$ into itself. Let $j\in \{m,\ldots ,m+L-1\}$.  Then $\varphi^{j}(x_0)\in Z_i$ for some $i$.
Then by the above, we have $\{\varphi^{Ln+j}(x_0)\colon n\ge 0\}$ is Zariski dense in $Z_i$.
Moreover, $f(\varphi^{Ln+j}(x_0))\in G$ for every $n\ge 0$ and so there are some $e\ge 0$ and some endomorphism $\Psi:\mathbb{G}_m^e\to \mathbb{G}_m^e$ and a map $g:\mathbb{G}_m^e \to \mathbb{G}_m$ such that $f(\varphi^{Ln+j}(x_0))= g\circ \Psi^n (z_0)$ for some $z_0\in \mathbb{G}_m^e$ whose coordinates lie in $G$.  Let $h_1,\ldots ,h_m$ be a set of generators for $G$.
Then $$\Psi^n(z_0) = (h_1^{a_{1,1}(n)}\cdots h_m^{a_{1,m}(n)}, \ldots , h_1^{a_{e,1}(n)}\cdots h_m^{a_{e,m}(n)})$$ for some integer-valued sequence $a_{i,j}(n)$.  (There may be several choices for the sequences $a_{i,j}(n)$ if the $h_i$ are not multiplicatively independent.)
Since $$\Psi(x_1,\ldots ,x_e) = (h_1^{p_{1,1}}\cdots h_m^{p_{1,m}} x_1^{q_{1,1}}\cdots x_e^{q_{1,e}},\ldots , h_1^{p_{e,1}}\cdots h_m^{p_{e,m}} x_1^{q_{e,1}}\cdots x_e^{q_{e,e}}),$$
there is a choice of sequences $a_{i,j}(n)$ such that there are an integer matrix $A$ and an integer vector ${\bf p}$ such that
$${\bf v}(n+1)=A{\bf v}(n) + {\bf p}$$ for every $n\ge 0$, where ${\bf v}(n)$ is the column vector whose entries are $a_{i,j}(n)$ for $i=1,\ldots ,e$, and $j=1,\ldots ,m$ in some fixed ordering of the indices that does not vary with $n$.
In particular, if $Q(x)=q_0+q_1x+\cdots + q_r x^r \in \mathbb{Z}[x]$ then
$$Q(A){\bf v}(n) = q_0 {\bf v}(n) + q_1{\bf v}(n+1)+\cdots + q_r {\bf v}(n+r) + {\bf b}_Q$$ for $n\ge 0$, where ${\bf b}_Q$ is an integer vector that depends upon $Q$ but not upon $n$.
In particular, if we take $Q(x)$ to be the characteristic polynomial of $A$, the Cayley-Hamilton theorem gives that the vectors ${\bf v}(n)$ satisfy a non-trivial affine linear recurrence of the form
$$0 = q_0 {\bf v}(n) + q_1{\bf v}(n+1)+\cdots + q_r {\bf v}(n+r) + {\bf b}_Q$$ for $n\ge 0$.
In particular, substituting $n+1$ for $n$ into this equation and subtracting from our original equation gives a recurrence
$$0 = q_0 {\bf v}(n) + (q_1-q_0){\bf v}(n+1)+\cdots + (q_r -q_{r-1}) {\bf v}(n+r)  - q_r {\bf v}(n+r+1).$$
It follows that each $a_{i,j}(n)$ satisfies a linear recurrence.  Then applying the map $g$ and using the fact that a sum of sequences satisfying a linear recurrence also satisfies a linear recurrence now gives the result.
\end{proof}

\section{Proof of Theorem \ref{main theorem}}\label{sec:MT}

In this section we prove Theorem \ref{main theorem}. The set up is as follows: $X$ is a quasiprojective variety defined over a field $K$ of characteristic zero, $\varphi:X\dashrightarrow X$ is a rational map, $x_0\in X$ is a point whose forward $\varphi$-orbit is well-defined, $f:X\dashrightarrow K$ is a rational function defined on this orbit, and $G$ is a finitely generated subgroup of $K^*$. Finally, we let
\[  N:= \{n\in \N_0 : f(\varphi^n(x_0))\in G\}. \]

We first show that if $N$ has a positive Banach density then it must contain an infinite arithmetic progression. Once a single arithmetic progression is obtained, we then use noetherian induction to show that $N$ is a union of finitely
many arithmetic progressions together with a set of Banach density zero.

\subsection{A single arithmetic progression.} With notation as above, in this section we will prove:

\begin{prop}
\label{arithmetic progression}
Let $X$ be a quasiprojective variety over an algebraically closed field $K$ of characteristic zero, let $\varphi:X\dashrightarrow X$ be a rational map, let $f:X\dashrightarrow K$ be a rational function, and let $G\leq K^*$ be a finitely generated group. Suppose that $x_0\in X$ is a point with well-defined forward $\varphi$-orbit that also avoids the indeterminacy locus of $f$.  Then if the set
\[ N:=\left\{n\in \N_0 : f(\varphi^n(x_0)) \in G\right\} \]
has a positive Banach density then it contains an infinite arithmetic progression.
\end{prop}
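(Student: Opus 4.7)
I would prove Proposition \ref{arithmetic progression} by induction on $d = \dim X$, the base case $d = 0$ being trivial since the orbit, and hence $N$, is then eventually periodic. Two standard reductions precede the main argument. First, the data $X, \varphi, f, x_0$ and a finite generating set of $G$ are all defined over some finitely generated subfield $K_0 \subseteq K$, so $a_n := f(\varphi^n(x_0)) \in K_0$ and I may apply Proposition \ref{recurrence units} with $K_0$ in place of $K$. Second, following the noetherian orbit-reduction used in the proof of Corollary \ref{orbit}, the closures $\overline{\{\varphi^n(x_0):n\geq m\}}$ stabilize to a closed set $Y$ whose irreducible components are permuted by $\varphi$; for some $L$ the power $\varphi^L$ preserves each component, and by pigeonhole $N$ has positive Banach density on some residue class $j$ modulo $L$. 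An infinite AP in the rescaled subset lifts to one in $N$, so after this reduction I may assume $X$ is irreducible of dimension $d$ and that the $\varphi$-orbit of $x_0$ is Zariski dense in $X$.

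The heart of the argument is a multiple-recurrence input combined with Lemma \ref{dichotomy}. Using iterated Khintchine-style recurrence (provable from Cauchy--Schwarz correlation estimates on $\sum_n \mathbf{1}_N(n)\mathbf{1}_N(n+k)$, or equivalently from the Furstenberg correspondence applied to the shift on $\mathbb{N}_0$), I produce integers $0 = k_0 < k_1 < \cdots < k_d$ such that $N^* := \bigcap_{j=0}^d (N - k_j)$ has positive Banach density. Consider the $d+1$ rational functions $f_j := f\circ\varphi^{k_j}$ on $X$. If $\{\varphi^n(x_0): n \in N^*\}$ is Zariski dense in $X$, then $X_G(f_0,\ldots,f_d)$ is Zariski dense, and Lemma \ref{dichotomy} yields integers $p_0,\ldots,p_d$ with $\gcd(p_0,\ldots,p_d)=1$ and a constant $c \in K^*$ with $\prod_j f_j^{p_j} \equiv c$ on $X$. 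Evaluating at $\varphi^n(x_0)$ gives $\prod_j a_{n+k_j}^{p_j} = c$ for all $n$; taking the ratio with the identity at $n+1$ removes the constant and yields an annihilating polynomial $(x-1)\sum_j p_j x^{k_j}$ for $(a_n)$ in $\mathbb{Z}[\mathbb{N}_0]$ whose nonzero coefficients have gcd one. Thus $(a_n)$ satisfies a multiplicative $\mathbb{N}_0$-quasilinear recurrence, Proposition \ref{recurrence units} gives that $N$ is eventually periodic, and positive Banach density forces an infinite arithmetic progression in $N$.

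If instead $\{\varphi^n(x_0): n \in N^*\}$ is \emph{not} Zariski dense in $X$, then it is contained in some proper closed subvariety $Z \subsetneq X$. Writing $Z$ as the common zero locus of rational functions $g_1,\ldots,g_\ell$ and applying the $G=\{1\}$ case of Theorem \ref{main theorem}, known from \cite{BGT, Gig, Petsche}, to each $1 - g_i$, one obtains that $\{n: \varphi^n(x_0) \in Z\}$ is eventually periodic. Since $N^*$ sits inside this eventually periodic set and has positive Banach density, $N^*$ has positive-density intersection with one of its infinite arithmetic progressions $\{a + bk: k\geq 0\}$. Let $Z'$ be the Zariski closure of the orbit $\{\varphi^{a+bk}(x_0): k\geq 0\} \subseteq Z$; then $\varphi^b$ restricts to a dominant rational self-map of $Z'$, the $\varphi^b$-orbit of $\varphi^a(x_0)$ is Zariski dense in $Z'$, and $\dim Z' < d$. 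The set $N' := \{k : a + bk \in N\}$ has positive Banach density, so the inductive hypothesis applied to $(Z', \varphi^b|_{Z'}, \varphi^a(x_0), f|_{Z'}, G)$ produces an infinite AP $\{c + ek\}$ in $N'$, which lifts to the infinite AP $\{(a+bc) + (be)k\}$ in $N$. The main obstacle is the Case-B appeal to the $G=\{1\}$ case of Theorem \ref{main theorem}, taken as a black box from the cited references; all remaining steps combine multiple recurrence, Lemma \ref{dichotomy}, and the recurrence theory developed in Section \ref{sec:AB}.
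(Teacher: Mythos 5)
Your overall architecture is essentially the paper's: produce $d+1$ shifts of $N$ whose common intersection has positive Banach density, feed the shifted functions $f\circ\varphi^{k_j}$ into Lemma \ref{dichotomy} when the corresponding orbit points are Zariski dense, convert the multiplicative dependence into a primitive annihilating polynomial and invoke the machinery of Section \ref{sec:AB}, and otherwise drop to a proper subvariety and induct. (The paper runs a noetherian minimal-counterexample argument and uses Furstenberg's multiple recurrence theorem to get shifts in arithmetic progression; your iterated Poincar\'e recurrence with arbitrary shifts plus dimension induction, together with the $(x-1)$-trick and Gauss's lemma to keep the relation primitive, is a legitimate variant.) However, there is one genuine gap: you never address the indices $n$ with $a_n=f(\varphi^n(x_0))=0$. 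The identity $\prod_j f_j^{p_j}\equiv c$ in $K(X)$ only yields $\prod_j a_{n+k_j}^{p_j}=c$ at those $n$ for which every $a_{n+k_j}$ is nonzero; at other $n$ the equation is meaningless (negative exponents) or false (positive exponents), so you do not obtain an annihilating element of $\Z[\N_0]$ for the full sequence. Moreover Proposition \ref{recurrence units} applies only to sequences in $(K^*)^{\N_0}$, and nothing in your reductions rules out, say, $a_n=0$ for all even $n$ and $a_n\in G$ for all odd $n$; in that situation $(a_n)$ is not valued in an abelian group at all and the chain ``quasilinear $\Rightarrow$ linear $\Rightarrow$ $N$ eventually periodic'' collapses, while $N$ still has positive density, so your Case A simply does not conclude.

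The repair is exactly the step the paper performs first and that you omitted: apply \cite[Theorem 1.4]{BGT} to the zero set, pass to a progression $L\N_0+j$ on which $N$ retains positive Banach density while $\{n\colon a_n=0\}$ has Banach density zero, and then argue as in part (1) of Lemma \ref{lem:un} that the multiplicative relation forces zeros to propagate forward with bounded gaps (write $f_{j_0}^{p_{j_0}}$, with $j_0$ the least index having $p_{j_0}\neq 0$, in terms of the later $f_j$'s), so a single zero would give the zero set positive Banach density, a contradiction; hence $a_n\in K^*$ for all $n$ and your Case A argument then goes through. Since you already invoke a BGT-type statement in Case B, this uses only tools you have on hand. (A small additional caveat there: \cite{BGT} gives that $\{n\colon \varphi^n(x_0)\in Z\}$ is a finite union of arithmetic progressions together with a set of Banach density zero, not literally an eventually periodic set; your use of it only needs that $N^*$ meets one infinite progression in positive density, which is still correct.)
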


To prove this result, we require a lemma.
\begin{lemma} Let $K$ be an algebraically closed field, let $X$ be an irreducible quasiprojective variety over $K$, let $\varphi:X\to X$ and $f:X\to \mathbb{P}^1$ be rational maps, and let $x_0$ be a point whose forward orbit under $\varphi$ is defined and is Zariski dense and avoids the indeterminacy locus of $f$.  If the set of $n$ for which $f\circ \varphi^n(x_0)=0$ 
has Banach density zero and if $u_n:=f\circ \varphi^n(x_0)$ has the property that there exist $C\neq 0$ and integers $i_0,\ldots ,i_d$ with $i_0i_d\neq 0$ and $\gcd(i_0,\ldots ,i_d)=1$ such that
$u_n^{i_0}\cdots u_{n+d}^{i_d} = C$ for every $n\ge 0$ then:
\begin{enumerate}
\item $u_n\in K^*$ for all $n$;
\item the set of $n$ for which $u_n\in G$ is an eventually periodic set, whenever $G$ is a finitely generated subgroup of $K^*$.
\end{enumerate}
\label{lem:un}
\end{lemma}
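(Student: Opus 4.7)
The plan is to prove (1) by propagating zeros of $(u_n)$ and contradicting the Banach density hypothesis, and then to prove (2) by extracting a homogeneous multiplicative quasilinear recurrence from the given inhomogeneous one and invoking Proposition \ref{recurrence units}.

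For part (1), I would set $N_0 = \{n \geq 0 : u_n = 0\}$ and argue for contradiction that $N_0 \neq \emptyset$. The relation at an index $n_0 \in N_0$ reads
\[ u_{n_0}^{i_0} u_{n_0+1}^{i_1} \cdots u_{n_0+d}^{i_d} = C, \]
and since $i_0 \neq 0$ and $u_{n_0} = 0$, the factor $u_{n_0}^{i_0}$ is either identically $0$ (if $i_0 > 0$) or a pole (if $i_0 < 0$); for the product to land at the finite nonzero constant $C$, a compensating zero $u_{n_0+j}=0$ with $1 \leq j \leq d$ and $i_j$ of opposite sign to $i_0$ is forced. Analogously, the relation at index $n_0 - d$ (using that $i_d \neq 0$) forces a zero in $[n_0 - d, n_0 - 1]$. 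Iterating, every element of $N_0$ admits a successor in $N_0$ within distance at most $d$, so $N_0$ has lower Banach density at least $1/d$ whenever it is nonempty. This contradicts $\delta(N_0) = 0$, so $N_0 = \emptyset$ and $u_n \in K^*$ for every $n$.

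For part (2), having placed $(u_n)$ in $K^*$, I would derive a homogeneous multiplicative recurrence by dividing the relation at index $n$ by the relation at index $n+1$, yielding
\[ u_n^{j_0} u_{n+1}^{j_1} \cdots u_{n+d+1}^{j_{d+1}} = 1 \qquad \text{for all } n \geq 0, \]
where $j_0 = i_0$, $j_k = i_k - i_{k-1}$ for $1 \leq k \leq d$, and $j_{d+1} = -i_d$. From the telescoping identities $i_\ell = j_0 + j_1 + \cdots + j_\ell$, any common divisor of $j_0, \ldots, j_{d+1}$ divides each $i_\ell$ and hence divides $\gcd(i_0, \ldots, i_d) = 1$, so $\gcd(j_0, \ldots, j_{d+1}) = 1$. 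Thus $(u_n)$ satisfies an $\mathbb{N}_0$-quasilinear multiplicative recurrence in the sense of Definition \ref{quasilinear defn}.

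The last step is a reduction to a finitely generated subfield of $K$, as required by Proposition \ref{recurrence units}. Since $X$, $\varphi$, $f$, the point $x_0$, the constant $C$, and the generators of $G$ are specified by only finitely many parameters, there exists a finitely generated subfield $K_0 \subseteq K$ over $\mathbb{Q}$ containing them all; then every $u_n = f(\varphi^n(x_0))$ lies in $K_0^*$, the quasilinear relation still holds in $K_0^*$, and $G \leq K_0^*$. Applying Proposition \ref{recurrence units} inside $K_0$ yields that $\{n : u_n \in G\}$ is eventually periodic, which is conclusion (2). I expect the main obstacle to be making the zero-propagation argument in (1) fully airtight across every sign configuration of $(i_0, \ldots, i_d)$ and across boundary values of $n_0$ (where not all relations at $n_0, n_0-1, \ldots, n_0-d$ are available); the remainder is routine manipulation, with Proposition \ref{recurrence units} doing the real work in step (2).
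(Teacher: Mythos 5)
Your proposal follows essentially the same route as the paper: for part (2), telescoping consecutive relations into a homogeneous relation with exponents $j_0=i_0$, $j_k=i_k-i_{k-1}$, $j_{d+1}=-i_d$, checking primitivity via $i_\ell=j_0+\cdots+j_\ell$, descending to a finitely generated subfield containing the data and the generators of $G$, and invoking Proposition \ref{recurrence units} is exactly what the paper does; and your zero-propagation idea for part (1), with the resulting positive-density contradiction, is also the paper's.

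The one place where your write-up is not yet airtight is precisely the point you flag in (1): the value-level bookkeeping with ``poles'' and ``compensating zeros'' is delicate because, read literally in $K$, the relation $u_{n_0}^{i_0}\cdots u_{n_0+d}^{i_d}=C$ is undefined (or outright false) as soon as some $u_{n_0+j}=0$ occurs under a nonzero exponent, so it cannot be manipulated factor by factor; yet the hypothesis explicitly allows a density-zero set of zeros. The paper's fix is to use the Zariski density of the orbit to promote the relation to an identity of rational functions on $X$, namely $f^{i_0}=C\prod_{j=1}^{d}(f\circ\varphi^{j})^{-i_j}$; then if $u_n=0$ the left-hand side has a zero or pole at $\varphi^{n}(x_0)$, so some $f\circ\varphi^{j}$ with $1\le j\le d$ has a zero or pole there, and since the orbit avoids the indeterminacy locus of $f$ the pole option is excluded, forcing $u_{n+j}=0$. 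This disposes of all sign configurations at once at the function level (and only forward propagation is needed; your use of the relation at $n_0-d$ is superfluous). With that substitution in (1), your argument coincides with the paper's proof.
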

\begin{proof}
Since $u_n^{i_0}\cdots u_{n+d}^{i_d} = C$ and since $x_0$ has a Zariski dense orbit, we have
$f^{i_0} = C\prod_{j=1}^d (f\circ \varphi^j)^{-i_j}$.  In particular, if $f$ has a zero at $\varphi^n(x_0)$ for some $n$, then there is some $j\in \{1,2,\ldots ,d\}$ for which $f\circ \varphi^j$ has a zero or a pole at
$\varphi^n(x_0)$.  But since the orbit of $x_0$ under $\varphi$ avoids the indeterminacy locus of $f$, $f(\varphi^{j+n}(x_0))=0$ for some $j\in \{1,2,\ldots ,d\}$. Hence if $u_n=0$ then $u_{n+j}=0$ for some $j\in \{1,2,\ldots ,d\}$.
In particular, $\{n\colon u_n=0\}$ has a positive Banach density, a contradiction.  Thus $u_n\in K^*$.  In fact, there is a finitely generated extension of $\Q$, $K_0\subseteq K$, such that $x_0\in X(K_0)$ and such that $\varphi$ and $f$ are defined over $K_0$.  It follows that $u_n\in K_0^*$ for all $n$ and using the equation $u_n^{i_0}\cdots u_{n+d}^{i_d} = C$ and substituting $n+1$ for $n$ and taking quotients, we have
$$u_{n+d+1}^{i_{d}} u_{n+d}^{i_{d-1}-i_d} \cdots u_{n+1}^{i_0-i_1} u_n^{-i_0}=1.$$
Moreover, it is straightforward to show that $\gcd(i_0,i_0-i_1,\ldots ,i_{d-1}-i_d,i_d)=1$ and so $(u_n)$ satisfies an $\mathbb{N}_0$-quasilinear recurrence.  But that means it satisfies a linear recurrence by Lemma \ref{quasilinear implies linear}.  In particular, the $u_i$ are all contained in a subfield $K_0$ of $K$ that is finitely generated over $\Q$
and so the result follows from Proposition \ref{recurrence units}.
\end{proof}

\begin{proof}[Proof of Proposition \ref{arithmetic progression}]
By \cite[Theorem 1.4]{BGT} there is some positive integer $L$ such that for $j\in \{0,\ldots ,L-1\}$ we have $\mathcal{Z}_j:=\{n\colon f\circ \varphi^{Ln+j}(x_0)=0\}$ is a either 
a set of Banach density zero or contains all sufficiently large natural numbers.  If $\delta(N)>0$ then there is some $j$ such that $N\cap (L\mathbb{N}_0+j)$
has a positive Banach density and such that $\mathcal{Z}_j$ has Banach density zero.
Then we can replace $\varphi$ by $\varphi^L$ and $x_0$ by $\varphi^j(x_0)$ and we may assume without loss of generality that the set of $n$ for
which $f\circ \varphi^n(x_0)=0$ has Banach density zero.

Let $\mathcal{S}$ denote the collection of Zariski closed subsets $Y$ of $X$ for which there exists a rational self-map $\Psi: Y\dashrightarrow Y$ and $y_0\in Y$ whose
forward orbit under $\Psi$ is well-defined and avoids the indeterminacy locus of $f$ and such that the following hold:
\begin{enumerate}
\item[(i)] $N(Y,y_0,\Psi,f,G):=\{n\colon f\circ \Psi^n(y_0)\in G\}$ has a positive Banach density but does not contain an infinite arithmetic progression;
\item[(ii)] $\{n\colon f\circ \Psi^n(y_0)=0\}$ has Banach density zero.
\end{enumerate}
If $\mathcal{S}$ is empty, then we are done.  Thus we may assume $\mathcal{S}$ is non-empty and since $X$ is a noetherian topological space, there is some minimal element $Y$ in $\mathcal{S}$.  By assumption, there exists a rational self-map $\Psi: Y\dashrightarrow Y$ and $y_0\in Y$ such that conditions (i) and (ii) hold.

Observe that the orbit of $y_0$ under $\Psi$ must be Zariski dense in $Y$, since otherwise, we could replace $Y$ with the Zariski closure of this orbit and construct a smaller counterexample.  We also note that $Y$ is necessarily irreducible.  To see this, suppose towards a contradiction, that this is not the case and let $Y_1,\ldots ,Y_r$ denote the irreducible components of $Y$, with $r\ge 2$. Then since the orbit of $y_0$ is Zariski dense, $\Psi$ is dominant and hence it permutes the irreducible components of 
$Y$ in the sense that there is a permutation $\sigma$ of $\{1,\ldots ,r\}$ with the property that $\Psi(Y_i)$ is Zariski dense in $Y_{\sigma(i)}$.  It follows that there is some $M>1$ such that $\Psi^M$ maps $Y_i$ into $Y_i$ for every $i$.
Now there must be some $j\in \{0,\ldots ,M-1\}$ such that $(M\mathbb{N}+j)\cap N_Y$ has a positive Banach density.  Then $\Psi^j(y_0)\in Y_i$ for some $i$, and so by construction
$N(Y_i,\Psi^j(y_0), \Psi^M, f, G)$ has a positive Banach density.  Since $Y_i$ is a proper closed subset of $Y$, by minimality of $Y$, $N(Y_i,\Psi^j(y_0), \Psi^M, f, G)$ contains an infinite arithmetic progression.  But $N(Y_i,\Psi^j(y_0), \Psi^L, f, G)\subseteq N(Y,y_0,\Psi,f,G)$ and so $N(Y,y_0,\Psi,f,G)$ contains an infinite arithmetic progression, a contradiction.  Thus $Y$ is irreducible.

Let $d:=\dim(Y)$. Since the Banach density of $N(Y,y_0,\Psi,f,G)$ is positive, a version of Szemer\'{e}di's Theorem \cite{Sze} due to Furstenberg \cite[Theorem 1.4]{Fur} gives that there is a set $A$ of 
positive Banach density and a fixed integer $b\geq 1$ such that $N$ contains the finite progression
\[ a,a+b,a+2b,\ldots,a+db \]
for every $a\in A$. Setting, $f_n:=f\circ\Psi^{bn}$ for $n\geq 0$, we have defined $d+1$ rational functions $f_0,\ldots,f_d$, so by Lemma \ref{dichotomy} either the set \[ Y_G:=Y_G(f_0,\ldots,f_d)=\{x\in Y : f_0(x),\ldots,f_d(x)\in G\} \] is contained in a proper subvariety of $Y$, or the functions $f_0,\ldots,f_d$ satisfy some multiplicative dependence relation. We proceed by ruling out the first possibility.
Suppose that $\cl{Y_G}\subsetneq Y$. Since $\Psi^a(y_0)\in \cl{Y_G}$ for every $a\in A$, the set
\[ P:=\{n\in \N_0: \Psi^n(y_0)\in \cl{Y_G}\} \]
has a positive Banach density. Hence \cite[Theorem 1.4]{BGT} gives that $P$ is a union of infinite arithmetic progressions $A_1,\ldots,A_r$ and a set of density zero.  In particular, since $P$
has a positive Banach density, $P$ contains an infinite arithmetic progression.  But since $P\subseteq N(Y,y_0,\Psi,f,G)$, we then see $N(Y,y_0,\Psi,f,G)$ contains an infinite arithmetic progression, a contradiction.  It follows that $Y_G$ is Zariski dense in $Y$.

Combining this with Lemma \ref{dichotomy}, we conclude that there is a multiplicative dependence relation
\begin{equation}
\prod_{s=0}^{d} f(\Psi^{sb}(x))^{i_s}=C\in K^*,
\label{eq:XXX}
\end{equation}
where $i_0,\ldots,i_{d}\in \Z$  with $\gcd(i_0,\ldots,i_{d})=1$.
Then for $a\in \{0,\ldots ,b-1\}$ we let $u_a(n):=f(\Psi^{a+bn}(y_0))$.  Evaluating Equation (\ref{eq:XXX}) at $x=\Psi^{a+bn}(y_0)$ then gives the relation
\[ u_a(n)^{i_0}\cdots u_a(n+d)^{i_{d}} = C. \] 
and so Lemma \ref{lem:un} gives that $u_a(n)\in K^*$ for all $n\ge 0$ and that $u_a(n)$ satisfies a multiplicative $\N_0$ linear recurrence and that the set of $n$ for which $u_a(n)\in G$ is eventually periodic.  In particular, 
since there is some $a$
for which the set $\{n\colon u_a(n)\in G\}$ has a positive Banach density, for this $a$, $\{n\colon u_a(n)\in G\}$ contains an infinite arithmetic progression $c+e\mathbb{N}_0$.  This then gives that $N$ contains the infinite arithmetic progression $a+b(c+e\mathbb{N}_0)= (a+bc)+be\mathbb{N}_0$, as required.
\end{proof}

\subsection{A union of arithmetic progressions.} We now use Proposition \ref{arithmetic progression} to prove Theorem \ref{main theorem}.
\begin{proof}[Proof of Theorem \ref{main theorem}]
First, by \cite[Theorem 1.4]{BGT} there is some positive integer $L$ such that for $j\in \{0,\ldots ,L-1\}$ we have $\mathcal{Z}_j:=\{n\colon f\circ \varphi^{Ln+j}(x_0)=0\}$ is either a set of Banach density zero or contains all sufficiently large natural numbers.  Then to prove the result, it suffices to prove that for every natural number $j\in \{0,\ldots ,L-1\}$, the set of $n$ for which $f\circ \varphi^{Ln+j}(x_0)\in G$ is a finite union of arithmetic progressions along with 
a set of Banach density zero.  In the case that $\mathcal{Z}_j$ contains all sufficiently large natural numbers, this is immediate; hence we may replace $\varphi$ by $\varphi^L$ and $x_0$ by some point in the orbit under $\varphi$ and assume without loss of generality that the set $\mathcal{Z}$ of $n$ for which $f\circ \varphi^n(x_0)=0$ 
has Banach density zero.  We now let $X_{\ge i}$ denote the Zariski closure of $\{\varphi^n(x_0)\colon n\ge i\}$.  Then as in the proof of Corollary \ref{orbit}, we have that there is some $m$ such that $X_{\ge m}=X_{\ge m+1}=\cdots $ and without loss of generality we may replace $X$ with $X_{\ge m}$ and $x_0$ with $\varphi^m(x_0)$ and assume that the orbit of $x_0$ is Zariski dense in $X$.  Now let $X_1,\ldots ,X_r$ denote the irreducible components of $X$.
Then there is some positive integer $M$ such that $\varphi^M(X_i)$ is Zariski dense in $X_i$ for $i=1,\ldots ,r$.  Then it suffices to prove that for $j\in \{0,\ldots ,M-1\}$ we have $\{n\colon f\circ \varphi^{Mn+j}(x_0)\in G\}$ is a finite union of arithmetic progressions along with a 
set of Banach density zero.  Since $\{\varphi^{Mn+j}(x_0)\colon n\ge 0\}$ is Zariski dense in some component $X_i$, we may replace $X$ by $X_i$, $x_0$ by $\varphi^j(x_0)$ and $\varphi$ with $\varphi^M$ and we may assume that $X$ is irreducible and that $\{\varphi^n(x_0)\colon  n\ge 0\}$ is Zariski dense in $X$.
Now let $N:=\{n \colon f(\varphi^n(x_0))\in G\}$.  If $N$ has Banach density zero, then there is nothing to prove.  Thus we may assume that $N$ has a positive Banach density, and hence it contains an infinite arithmetic progression, say $a\mathbb{N}_0+b$ with $a>0$.

We point out that the Zariski closure, $Y$, of the set $\{\varphi^{an+b}(x_0)\colon n\ge 0\}$ must be Zariski dense in $X$, since the union of the closures $Y_i$ of $\varphi^i(Y)$ for $i=0,1,\ldots ,a-1$ contains all but finitely many points in the orbit of $x_0$ and hence is dense in $X$.  Since $X$ is irreducible, we then see that $Y_i$ must be $X$ for some $i$, which then gives that $Y=X$.

Now for each $i\geq 0$, define a rational function $f_i:=f\circ\varphi^{ai}$, and set
\[ X_G := \{ x\in X : f_0(x),\ldots,f_{d}(x)\in G\}, \]
where $d$ is the dimension of $X$.
Then $X_G$ contains $\{\varphi^{an+b}(x_0)\colon n\ge 0\}$, which is Zariski dense in $X$ and so Lemma \ref{dichotomy} gives that the functions $f_0,\ldots,f_{d}$ satisfy some multiplicative dependence of the form
$$f_0^{i_0}\cdots f_d^{i_d}=c$$ with $c$ nonzero and $i_0,\ldots ,i_d$ integers with $\gcd(i_0,\ldots ,i_d)=1$.  It follows that if we set
$u_n = f(\varphi^n(x_0))$ then
$u_n^{i_0} u_{n+a}^{i_1}\cdots u_{n+ad}^{i_d}=c$ for every $n\ge 0$.  Moreover, by assumption the set of $n$ for which $u_n=0$ has Banach density zero and thus by Lemma \ref{lem:un}, the set
\[ \{n\in \N_0 : u_n\in G\} \]
is eventually periodic. This completes the proof.
\end{proof}
\smallskip
\section{Heights of points in orbits}\label{sec:Height}
Corollary \ref{orbit} gives an interesting ``gap'' about heights of points in the forward orbit of a self-map $\varphi$ for varieties and maps defined over $\Q$.

In order to state this gap result, we must first recall the definition of the Weil height here. Let $K$ be a number field and let $M_K$ be the set of places of $K$. For a place $v$, let $|\cdot|_v$ be the corresponding absolute value, normalized so that $|p|_v=p^{-1}$ when $v$ lies over the $p$-adic valuation on $\Q$. Let $K_v$ be the completion of $K$ at a place $v$ and let $n_v:=[K_v:\Q_v]$. Now define a function $H:\cl{\Q}\rightarrow [1,\infty)$ as follows: for $x \in \cl{\Q}\setminus \{0\}$, choose any number field $K$ containing $x$, and set
\[ H(x)^{[K:\Q]} := \prod_{v\in M_K} \max\{|x|_v^{n_v},1\}. \]
This is independent of choice of $K$ and defines a function $H:\cl{\Q}\rightarrow [1,\infty)$ called the \textit{absolute Weil height}. We let $h:\cl{\Q}\rightarrow [0,\infty)$ be its logarithm; i.e., $h(x):=\log H(x)$. For further background on height functions, we refer the reader to \cite[Chapter~2]{BG06} and \cite[Chapter~3]{GTM241}.
We have the following result.
\begin{thm}
Let $X$ be an irreducible quasiprojective variety with a dominant self-map $\varphi:X\to X$ and let $f:{X}\to \mathbb{P}^1$ be a rational map, all defined over $\bar{\Q}$.  Suppose that $x\in X$ has the following properties:
\begin{enumerate}
\item every point in the orbit of $x$ under $\varphi$ avoids the indeterminacy loci of $\varphi$ and $f$;
\item there is a finitely generated multiplicative subgroup $G$ of $\bar{\Q}^*$ such that $f\circ \varphi^n(x)\in G$ for every $n\in \mathbb{N}_0$.
\end{enumerate}
Then if $h(f\circ \varphi^n(x)) = {\rm o}(n^2)$ then the sequence
$(f\circ \varphi^n(x))_n$ satisfies a linear recurrence.  More precisely, there exists an integer $L\ge 1$ such that for each $j\in \{0,\ldots, L-1\}$ there are $\alpha_j,\beta_j\in G$ such that for all $n$ sufficiently large we have
$$f\circ \varphi^{Ln+j}(x) = \alpha_j \beta_j^n.$$
\label{orbit2}
\end{thm}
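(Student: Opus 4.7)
The plan is to combine Corollary \ref{orbit} with a norm-like lower bound for the Weil height on a finitely generated multiplicative subgroup of $\bar{\Q}^*$. First I would apply Corollary \ref{orbit} to obtain integers $p\ge 0$ and $L\ge 1$, generators $h_1,\ldots,h_m$ of $G$, and integer-valued linear recurrences $b_{j,i}(n)$ with
$$f\circ \varphi^{Ln+j}(x) = \prod_{i=1}^m h_i^{b_{j,i}(n)}$$
for $n\ge p$ and each $j\in\{0,\ldots,L-1\}$. The goal is then to show that, after possibly enlarging $L$, each $b_{j,i}$ becomes an affine linear function of $n$ on the progression $Ln+j$.

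Next I would switch to a more convenient basis. Since $G$ is finitely generated, $G=T\oplus F$ where $T$ is the finite torsion subgroup (consisting of roots of unity) and $F$ is free on some multiplicatively independent $g_1,\ldots,g_s\in \bar{\Q}^*$. Rewriting each $h_i$ in terms of the $g_k$'s and an element of $T$ yields
$$f\circ \varphi^{Ln+j}(x) = \eta_j(n)\prod_{k=1}^s g_k^{A_{j,k}(n)},$$
where $\eta_j(n)\in T$ and each $A_{j,k}(n)\in\Z$ is again a linear recurrence (namely a $\Z$-linear combination of the $b_{j,i}$). Because $g_1,\ldots,g_s$ are multiplicatively independent, the map $(a_1,\ldots,a_s)\mapsto h(g_1^{a_1}\cdots g_s^{a_s})$ is a seminorm on $\R^s$ by subadditivity of $h$ and the identity $h(g^n)=|n|h(g)$; Northcott's theorem forces its kernel to be trivial, hence it is a norm. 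Consequently there is a constant $c>0$ with
$$\max_k |A_{j,k}(n)| \le c\cdot h\!\left(\prod_k g_k^{A_{j,k}(n)}\right) = c\cdot h(f\circ \varphi^{Ln+j}(x)),$$
where the equality uses $h(\zeta w)=h(w)$ for any root of unity $\zeta$. The hypothesis $h(f\circ \varphi^n(x))={\rm o}(n^2)$ then gives $|A_{j,k}(n)|={\rm o}(n^2)$.

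At this point I would invoke the structure theorem for integer linear recurrences. Each $A_{j,k}$ admits a closed form $\sum_r P_r(n)\lambda_r^n$ with polynomials $P_r$ and roots $\lambda_r$ of the integer characteristic polynomial (and so algebraic integers). The sub-quadratic growth forces $|\lambda_r|\le 1$ for every root contributing nontrivially, whence Kronecker's theorem identifies each such $\lambda_r$ as a root of unity (or zero). Passing to an arithmetic progression whose modulus is divisible by the orders of all occurring roots of unity, every $A_{j,k}$ becomes a polynomial in $n$, and the ${\rm o}(n^2)$ bound forces this polynomial to have degree at most $1$. Meanwhile, the torsion factor $\eta_j(n)$ is a finite-valued linear recurrence, hence eventually periodic, and a further refinement of $L$ makes it constant on each residue class. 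Absorbing all these refinements into $L$ yields $f\circ \varphi^{Ln+j}(x)=\alpha_j\beta_j^n$ for $n$ large with $\alpha_j,\beta_j\in G$, as required.

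The main obstacle is the height lower bound in the second step: establishing that the Weil height restricts to a norm on the free part of $G$ with a constant $c$ independent of $n$. While this is classical, it is where the number-theoretic content of the statement concentrates. The remaining ingredients -- the structure theorem for linear recurrences, Kronecker's theorem, and the careful bookkeeping of repeated refinements of the modulus $L$ -- are essentially routine once the norm estimate is in place.
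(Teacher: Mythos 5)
Your proposal is correct and follows essentially the same route as the paper: apply Corollary \ref{orbit}, convert the height hypothesis into an $o(n^2)$ bound on the exponent sequences via a lower bound for $h$ on the free part of $G$, deduce that these integer-valued linear recurrences are eventually affine-linear on a progression, and handle the torsion part by eventual periodicity before refining the modulus $L$. The only differences are at the lemma level: where the paper proves the height lower bound by an explicit computation with places (Lemma \ref{change of basis}) and cites \cite[Proposition 3.6]{BNZ} for the piecewise-linearity of subquadratically growing integer recurrences, you substitute a norm-equivalence argument resting on Northcott and a direct Fatou--Kronecker analysis of the characteristic roots, both of which are sound.
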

To prove this result, we first require an elementary estimate.
\begin{lemma}
\label{change of basis}
Let $K$ be a number field and let $G$ be a finitely generated free abelian subgroup of $ K^* \subset \bar{\Q}^*$ with multiplicative basis $g_1,\ldots,g_r$ for $G$.
Then there exists some positive constant $C$ such that
for each $a=g_1^{k_1}\cdots g_r^{k_r}\in G$ with $k_1,\ldots,k_r\in \Z$, we have the estimate
\[ h(a) \geq \max_{1\le i\le s}\, C|k_i|. \]
\end{lemma}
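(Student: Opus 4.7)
The plan is to rewrite $h(a)$ as a weighted $\ell^1$-norm of a logarithmic embedding of $G$, and then establish $\R$-linear independence of the images of the $g_i$ using Dirichlet's $S$-unit theorem. Specifically, by the product formula $\sum_{v\in M_K} n_v \log|a|_v = 0$, one has
\[
h(a) \;=\; \frac{1}{2[K:\Q]} \sum_{v \in M_K} n_v \bigl|\log|a|_v\bigr|.
\]
Let $S$ be the finite set of places of $K$ consisting of the archimedean places together with any finite place at which $|g_i|_v \neq 1$ for some $i$. Then $G \subseteq \mathcal{O}_{K,S}^*$, and the log-embedding $\Phi\colon G \to \R^S$, $\Phi(a) = (\log|a|_v)_{v \in S}$, is a well-defined homomorphism from multiplicative $G$ to additive $\R^S$ satisfying $\Phi(g_1^{k_1}\cdots g_r^{k_r}) = \sum_i k_i \Phi(g_i)$.

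The core step is to show that $\Phi(g_1),\ldots,\Phi(g_r)$ are $\R$-linearly independent in $\R^S$. By Kronecker's theorem, the kernel of $\Phi$ on $\mathcal{O}_{K,S}^*$ consists only of roots of unity, and these intersect $G$ trivially (as $G$ is free abelian), so $\Phi|_G$ is injective and $\Phi(G) \cong \Z^r$. Dirichlet's $S$-unit theorem says that $\Phi(\mathcal{O}_{K,S}^*)$ is a full-rank lattice in the hyperplane $H = \{x \in \R^S : \sum_v n_v x_v = 0\}$; in particular it is discrete in $\R^S$. Since $\Phi(G)$ is a rank-$r$ subgroup of this lattice, it is itself discrete in $\R^S$, hence a lattice of rank $r$ in its $\R$-span. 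This forces $\Phi(g_1),\ldots,\Phi(g_r)$ to be $\R$-linearly independent.

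From this linear independence, the map $L\colon \R^r \to \R^S$ sending $e_i \mapsto \Phi(g_i)$ is injective, and by equivalence of norms on $\R^r$ there is a constant $C' > 0$ with
\[
\sum_{v \in S} n_v \bigl|L(k)_v\bigr| \;\ge\; C' \max_i |k_i| \qquad \text{for all } k \in \R^r.
\]
Combining this with the rewritten height formula (and noting $|\Phi(a)_v|=0$ for $v \notin S$) yields $h(a) \ge C \max_i |k_i|$ with $C = C'/(2[K:\Q])$.

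The main obstacle is the upgrade from $\Q$-linear independence of the $\Phi(g_i)$ (which follows easily from multiplicative independence of the $g_i$ and the product formula) to $\R$-linear independence; this cannot be done by purely elementary means, and genuinely requires the discreteness of the image lattice supplied by Dirichlet's $S$-unit theorem. Without it, the kernel of $L$ could in principle contain an irrational direction, and integer vectors near such a direction would have small $L(k)$ yet arbitrarily large $\max_i |k_i|$, ruining the bound.
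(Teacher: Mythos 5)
Your proof is correct, and its overall shape matches the paper's: both arguments work with the finite set $S$ of places where the $g_i$ are not units, compare $h(a)$ with the vector $(\log|a|_v)_{v\in S}$, and recover the exponents $k_i$ by linear algebra over $\R$. The genuine difference is where the crucial $\R$-linear independence comes from. The paper defines $\Psi_i(g)=\log|g|_{v_i}$, bounds $h(a)\ge \kappa\,|L_i(k_1,\ldots,k_r)|$ place by place (using $h(a)=h(a^{-1})$), and then passes from injectivity of $\Psi:G\to\R^s$ (so the image has rank $r$ as an abelian group) to the existence of real constants $c_{i,j}$ with $\sum_j c_{i,j}L_j(x)=x_i$; this last step is stated via ``$\Q$-linear independence'' of $L_1,\ldots,L_r$ and is exactly the point you flag, since for a real matrix, injectivity on $\Z^r$ does not by itself give full rank over $\R$. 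You close that gap by invoking Kronecker's theorem (to get injectivity of $\Phi|_G$, which the paper asserts ``by construction'') together with Dirichlet's $S$-unit theorem: $\Phi(G)$ sits inside the discrete $S$-unit lattice, hence is a rank-$r$ lattice in its span, forcing $\Phi(g_1),\ldots,\Phi(g_r)$ to be $\R$-linearly independent; then your exact identity $h(a)=\tfrac{1}{2[K:\Q]}\sum_v n_v\bigl|\log|a|_v\bigr|$ and equivalence of norms on $\R^r$ give the constant $C$. So your write-up is a completed version of the same strategy: what it buys is a solid justification (discreteness of the log-image, equivalently the fact that $h$ induces a genuine norm on $G\otimes\R$) for the one step the paper treats most tersely, at the cost of citing Dirichlet's theorem, which the paper's argument does not explicitly use.
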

\begin{proof}
Since $G$ is a finitely generated subgroup of $K^*$ there exists a finite set $S$ of places of $K$ such that $|g|_v=1$ for every $g\in G$ whenever $v\not\in S$.
Let $v_1,\ldots ,v_s$ denote the elements of $S$.  Then for $i\in \{1,\ldots ,s\}$ we have a group homomorphism
$$\Psi_i : G\to \mathbb{R}$$ given by $g\mapsto \log\,|g|_{v_i}$.  Then there is a linear form $L_i(x_1,\ldots ,x_r)$ such that for $a=g_1^{k_1}\cdots g_r^{k_r}\in G$,
$$\Psi_i(a)=L_i(k_1,\ldots ,k_r).$$  We claim that $h(a)\ge |L_i(k_1,\ldots, k_s)|$ for $i=1,\ldots ,s$.  To see this, fix $i\in \{1,\ldots ,s\}$.
Since $\Psi(a^{-1})=-\Psi(a)$ and since $h(a)=h(a^{-1})$, we may assume without loss of generality that $|a|_{v_i}\ge 1$ and so $L_i(k_1,\ldots ,k_s)\ge 0$. Then
\begin{align*}
    h(a) &= \frac{1}{[K:\Q]}\sum_{v\in M_K} \log \max\{|a|_v^{n_v},1\} \\
         &\geq \frac{1}{[K:\Q]} \log\max\{|a|_{v_i}^{n_{v_i}},1\} \\
         &= \frac{n_{v_i}}{[K:\Q]}  L_i(k_1,\ldots ,k_s).
\end{align*}
Thus there is a positive constant $\kappa$ such that $h(a)\ge \kappa \cdot |L_i(k_1,\ldots ,k_s)|$ for $i=1,\ldots ,s$ and so \[ h(a) \geq \kappa\cdot \max_{1\le i\le s}(|L_i(k_1,\ldots, k_s)|). \]
By construction, the homomorphism $\Psi:G \to \mathbb{R}^s$ given by $g\mapsto (\Psi_i(g))_{1\le i\le s}$ is injective and so the image has rank $r$.  Thus after reindexing, we may assume that $L_1,\ldots ,L_r$ are linearly independent over $\mathbb{Q}$ and so there exist real constants $c_{i,j}$ for $1\le i,j\le r$ such that
$$\sum_{j=1}^r c_{i,j} L_j(x_1,\ldots ,x_s) = x_i$$ for $i=1,\ldots ,r$.  In particular, since for a given $i$ the $c_{i,j}$ cannot all be zero, there is some $C>0$ such that
$$0\neq \sum_{j=1}^r |c_{i,j}| \kappa^{-1} < C^{-1}$$ for $i=1,\ldots ,r$.
Then for $a=g_1^{k_1}\cdots g_r^{k_r}$, we have
\begin{align*}
   |k_i| &= \left| \sum_{j=1}^r c_{i,j} L_j(k_1,\ldots ,k_s)\right| \\
         &\leq \sum_{j=1}^r |c_{i,j}|\cdot |L_j(k_1,\ldots ,k_s)| \\
         &\leq \left(\sum_{j=1}^r |c_{i,j}|\kappa^{-1}\right) h(a)
\end{align*}
Thus $h(a)\ge C|k_i|$ for $i=1,\ldots ,r$, as required.
\end{proof}

\begin{proof}[Proof of Theorem \ref{orbit2}]
Let $g_1,\ldots ,g_d, g_{d+1},\ldots ,g_m$ be generators for $G$ so that $g_1,\ldots ,g_d$ generate a free abelian group and $g_{d+1},\ldots , g_{m}$ are roots of unity.  By Corollary \ref{orbit},
there are a positive integer $L$ and integer-valued sequences $b_{i,j}(n)$ for $j=0,\ldots ,L-1$ and $i=1,\ldots ,m$, each of which satisfies a linear recurrence, such that
$$f\circ \varphi^{Ln+j}(x) = \prod_i g_i^{b_{i,j}(n)}$$ for $n\ge p$.
Then multiplication by a root of unity does not affect the height of a number and so
$$h(f\circ \varphi^{Ln+j}(x))= h\left(\prod_{i=1}^d g_i^{b_{i,j}}\right).$$
Then if $h(f\circ \varphi^{Ln+j}(x))={\rm o}(n^2)$ then by Lemma \ref{change of basis} we must have
$b_{i,j}(n)={\rm o}(n^2)$ for $j=0,\ldots ,L-1$ and $i=1,\ldots ,d$.  Since it also is an integer-valued sequence satisfying a linear recurrence, we have that it is in fact ${\rm O}(n)$ and is ``piecewise linear'';
\textit{i.e.} it has the form $A+Bn$ on progressions of a fixed gap \cite[Proposition 3.6]{BNZ}.
Formally, this means that there exists a fixed $M\geq 1$ and integers $A_{i,j}, B_{i,j}$ for $j\in \{0,\ldots ,M-1\}$ and $i\in \{1,\ldots ,d\}$, and integer-valued sequences $c_{i,j}(n)$, which satisfy a linear recurrence for $i=d+1,\ldots ,m$ and $j=0,\ldots , M-1$, such that for $n$ sufficiently large we have
$$f\circ \varphi^{Mn+j}(x) = \prod_{i=1}^d g_i^{A_{i,j} + B_{i,j} n} \prod_{i=d+1}^m g_i^{c_{i,j}(n)}.$$
Since the $g_i$ are roots of unity for $i=d+1,\ldots ,m$ and since integer-valued sequences satifying a linear recurrence are eventually periodic modulo $N$ for every positive integer $N$, we see that for $n$ sufficiently large,
$f\circ \varphi^{Mn+j}(x)$ has the form $$\alpha_j \beta_j^n \omega^{t_j(n)},$$ where $\omega$ is a fixed $N$-th root of unity for some $N\ge 1$, $t_j(n)$ is eventually periodic, and $\alpha_j,\beta_j\in G$ and depend only on $j$ and not on $n$.  Since we only care about what holds for $n$ sufficiently large, it is no loss of generality to assume that each $t_j(n)$
is periodic and we let $p$ be a positive integer that is a common period for each of $t_0,\ldots ,t_{M-1}$. Then for $j\in \{0,\ldots ,M-1\}$ and $i\in \{0,\ldots ,p-1\}$ we have $$f\circ \varphi^{pM n+ Mi+j}(x)  = \left(\alpha_j \omega^{t_j(i)}\beta_j^i\right) ( \beta_j^p)^n.$$
The result now follows.
\end{proof}

\section{Applications to $D$-finite power series}\label{sec:Dfin}
In this section we apply our results to $D$-finite power series, showing how Theorem \ref{main theorem} generalizes a result of Methfessel \cite{Meth} and B\'ezivin \cite{Bez2}. We also look at classical results of P\'olya \cite{Pol} and B\'ezivin \cite{Bez} through a dynamical lens.

\begin{defn}
Let $K$ be a field.
A power series $F(x)\in K[[x]]\subseteq K((x))$ is \textit{$D$-finite} if the set of formal derivatives $\{F^{(i)}(x):i\geq 0\}$ is linearly dependent over
$K(x)\subseteq K((x))$; equivalently, $F(x)$ satisfies a linear differential equation of the form
\[ \sum_{i=0}^d p_i(x)F^{(i)}(x)=0 \]
where $p_0(x),\ldots,p_d(x)\in K[x]$ are polynomials, not all zero.

A sequence $(a_n)\in K^{\N_0}$ is \textit{holonomic} or \textit{P-recursive} over $K$ if it satisfies a recurrence relation
\[ a_{n+1} = \sum_{i=0}^d r_i(n)a_{n-i}, \]
for all $n\geq d$, where $r_0(t),\ldots,r_d(t)\in K(t)$ are rational functions. If each $r_i(t)$ is constant, then $(a_n)$ satisfies a \textit{$K$-linear recurrence}.
\end{defn}

For further background, we refer the reader to the works of Stanley \cite{Stanley, Stan}. It is well-known that if $F(x)=\sum_{n\geq 0}a_n x^n$ is a formal power series with coefficients in a field $K$ of characteristic zero, then $F(x)$ is $D$-finite (resp. rational) if and only if $(a_n)$ is $P$-recursive (resp. $K$-linearly recursive) \cite{Stanley}. The first application of our main result Theorem \ref{main theorem} in this setting is immediate, as follows.

Since $F(x)$ is $D$-finite, its coefficient sequence is $P$-recursive: there is a recurrence relation
\[ a_{n+1} = \sum_{i=0}^d r_i(n) a_{n-i}, \]
valid for all sufficiently large $n$, where the $r_i(x)\in K(x)$ are rational functions \cite{Stanley}. Thus we may define a rational map $\varphi:\mathbb{A}^{d+1}\dashrightarrow \mathbb{A}^{d+1}$ as follows:
\[ (t,t_1,\ldots,t_d) \mapsto \left(t+1, t_2,\ldots,t_d,\sum_{i=0}^d r_i(t)t_i \right).  \]
Here $(t,t_1,\ldots,t_d)$ are coordinates on $\mathbb{A}^{d+1}$. Now there is some $p>0$ such that none of the $r_i(x)$ have a pole at $x=n$ when $n\ge p$.  Now take the initial point to be $x_0:=(p,a_p,\ldots,a_{p+d-1})$ and the rational function $f(t,t_1,\ldots,t_d):=t_1$. Then the sequence $(a_n)_{n\geq 0}$ can be recovered as
\begin{equation} a_{n+p} = f(\varphi^n(x_0))\qquad ~{\rm for~}n\ge 0. \label{eq:Dfin}
\end{equation}
\begin{proof}[Proof of Theorem \ref{thm:Dfinite}]
By Equation (\ref{eq:Dfin}), after taking a suitable shift of the sequence $(a_n)_{n\geq 0}$, it can be recovered as
\[ a_n = f(\varphi^n(x_0)). \]
Thus the desired sets $N$ and $N_0$ are just
\[ N=\{n\in \N_0 : f(\varphi^n(x_0))\in G\}~~ {\rm and}~~ N_0=\{n\in \N_0 : f(\varphi^n(x_0))\in G\cup\{0\}\}. \]
Then we obtain the desired decomposition of $N$ from Theorem \ref{main theorem}; since
$N_0=N\cup Z$, where $Z=\{n\in \N_0 : f(\varphi^n(x_0))=0\}$, applying \cite[Theorem 1.4]{BGT} then gives that $Z$ is a finite union of arithmetic progressions along with a set of Banach density zero.  Then since both $N$ and $Z$ are expressible as a finite union of infinite arithmetic progressions along with 
a set of Banach density zero, so is their union.  The result follows.
\end{proof}
\subsection{Theorems of P\'olya and B\'ezivin.}
P\'olya \cite{Pol} showed that, given a fixed set of prime numbers $S$, if $F(x)=\sum a_n x^n\in \Z[[x]]$ is the power series of a rational function and the prime factors of $a_n$ lie inside of $S$ for every $n$, 
then there is some natural number $L$ such that for $n$ sufficiently large $$a_{Ln+j}=\frac{A_j}{B_j}\cdot \beta_j^n$$ where $A_j,B_j,$ and $\beta_j$ are integers whose prime factors lie inside of $S$ for $j=0,\ldots ,L-1$ and $B_j$
divides $A_j \beta_j^m$ for some positive integer $m$.  This result was later extended by B\'ezivin \cite{Bez}, who showed that if $K$ is a field of characteristic
zero and $G\le K^*$ is a finitely generated group then if $F(x)=\sum a_n x^n$ is a $D$-finite power series such that there is some fixed $m$ such that each $a_n$
is a sum of at most $m$ elements of $G$, then $F(x)$ is rational; moreover, he gave a precise form of these rational functions.
We show how to recover B\'ezivin's theorem in the case that $m=1$ from the dynamical results we obtained in the preceding sections. In particular, this recovers P\'olya's theorem.  We conclude by showing the relationship between these classical theorems and the dynamical results developed in the preceding sections.  More precisely, we give a dynamical proof of the following result.
\begin{thm} \label{thm:B} (B\'ezivin \cite{Bez}) Let $K$ be a field of characteristic
zero and let $F(x)=\sum a_n x^n\in K[[x]]$ be a $D$-finite power series such that $a_n\in G\cup \{0\}$ for every $n$,
where $G$ is a finitely generated subgroup of $ K^*$.  Then $F(x)$ is rational.
\end{thm}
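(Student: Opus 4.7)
The plan is to combine the dynamical setup of Theorem~\ref{thm:Dfinite} with Corollary~\ref{orbit}, the zero-elimination Lemma~\ref{lem:un}, and the standard height growth estimate for $P$-recursive sequences. I would first reduce to $K=\bar{\mathbb{Q}}$: the $P$-recurrence, initial values, and a finite generating set for $G$ all lie in a finitely generated extension $K_0$ of $\mathbb{Q}$, and a generic specialization of the transcendental coordinates of $K_0$ to $\bar{\mathbb{Q}}$ preserves the $P$-recurrence and the injectivity on $G$, with rationality descending via the Hankel determinant criterion. Next, I would set up the dynamics as in the proof of Theorem~\ref{thm:Dfinite}, writing $a_{n+p}=f(\varphi^n(x_0))$ on an affine space. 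By Theorem~\ref{main theorem} and by \cite[Theorem~1.4]{BGT} applied to the zero locus of $f$, both $N=\{n:a_n\in G\}$ and $Z=\{n:a_n=0\}$ are a finite union of arithmetic progressions together with a Banach density zero set. Splitting $F(x)=\sum_j x^jF_j(x^L)$ along residue classes modulo a common $L$ preserves $D$-finiteness and rationality, so each residue class may be treated separately; each class has $a_n=0$ for density $1$ of $n$ or $a_n\in G$ for density $1$ of $n$, and in the former case \cite[Theorem~1.4]{BGT} combined with irreducibility of the orbit closure forces $\varphi^n(x_0)\in f^{-1}(0)$ for every large $n$, so $F_j$ is a polynomial.

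On a class where $a_n\in G$ for density $1$ of $n$, the density $1$ portion of the orbit is Zariski dense in the orbit closure (otherwise \cite[Theorem~1.4]{BGT} would force the orbit into a proper subvariety for all large $n$, a contradiction), so the joint $G$-preimage $X_G(f,f\circ\varphi^L,\ldots,f\circ\varphi^{dL})$ is Zariski dense. Lemma~\ref{dichotomy} then yields a multiplicative dependence $u_n^{i_0}\cdots u_{n+d}^{i_d}=C$ with $C\in K^*$ and $\gcd(i_0,\ldots,i_d)=1$ for $u_n=a_{Ln+j+p}$. The crux of the proof is the application of Lemma~\ref{lem:un}, which upgrades ``zeros of density zero'' into ``no zeros at all,'' so that $a_n\in G$ for every large $n$ in the class. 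Corollary~\ref{orbit} then applies and gives $a_n=\prod_i h_i^{b_i(n)}$ on a sub-AP with integer-valued linear recurrences $b_i(n)$.

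The remaining step is the height estimate: a straightforward induction on the $P$-recurrence $a_{n+1}=\sum_i r_i(n)a_{n-i}$ with $r_i\in\bar{\mathbb{Q}}(t)$ gives $h(a_n)=O(n\log n)=o(n^2)$. Lemma~\ref{change of basis} then forces $|b_i(n)|=o(n^2)$, and an integer-valued linear recurrence of subquadratic growth is eventually a linear polynomial on fine APs (as in the proof of Theorem~\ref{orbit2}, via \cite[Proposition~3.6]{BNZ}). Hence $a_n=\alpha\beta^n$ on APs, making $F_j$ rational; summing over residue classes and descending through the specialization completes the proof.
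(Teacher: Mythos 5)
Your treatment of the algebraic case is essentially the paper's own argument: reduce via Theorem~\ref{thm:Dfinite}, \cite[Theorem 1.4]{BGT} and Lemma~\ref{lem:un} to sections on which $a_n\in G$ for all $n$, invoke Corollary~\ref{orbit} to write the coefficients as $\prod_i h_i^{b_i(n)}$ with integer-valued linear recurrences in the exponents, and then use the height bound $h(a_n)={\rm O}(n\log n)$ together with Lemma~\ref{change of basis} and \cite[Proposition 3.6]{BNZ} (this is exactly Theorem~\ref{orbit2}) to get $a_n=\alpha\beta^n$ on progressions. The genuine gap is your very first step: the reduction from a general finitely generated extension $K_0$ of $\Q$ to $\bar{\Q}$ by ``a generic specialization\ldots with rationality descending via the Hankel determinant criterion.'' Rationality of a single specialization $F^{\sigma}$ does not imply rationality of $F$. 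By Kronecker's criterion you would need, for some fixed $k$, that $H^{(k)}_n=\det(a_{n+i+j})_{0\le i,j\le k}$ vanishes for all large $n$, and the vanishing of $\sigma(H^{(k)}_n)$ only gives this if $\sigma$ is known not to kill the (possibly infinitely many) nonzero determinants $H^{(k)}_n$; choosing $\sigma$ to avoid countably many prescribed hypersurfaces is in general impossible when the target is the countable field $\bar{\Q}$ (and you cannot specialize into $\C$ instead, since your endgame needs Weil heights). Nor does ``injectivity on $G$'' rescue the descent: a constant-coefficient linear relation among the $\sigma(a_n)$ involves sums, so lifting it would require injectivity on additive spans, far more than injectivity on the group. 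A specialization argument could conceivably be repaired with a Zariski-dense family of specializations plus a bound on the order of the specialized recurrences that is uniform in $\sigma$, but nothing of the sort is indicated in your sketch.

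This missing step is precisely where the paper does its real work, and it does it differently: instead of specializing, it inducts on ${\rm trdeg}_{\Q}(K_0)$, chooses an intermediate field $E$ with $K_0$ of transcendence degree one over $E$ and algebraically closed in $K_0$, views $K_0\otimes_E\bar E$ inside the function field of a smooth projective curve, and uses Lemma~\ref{lem:pole} to bound the orders of zeros and poles of $a_n$ along that curve by ${\rm O}(n)$. Combined with the linear recurrences for the exponents this gives, via \cite[Proposition 3.6]{BNZ}, a factorization $a_{rn+j}=C_{j,n}h_j^nP_j$ with $C_{j,n}\in\bar E^*$, and the induction hypothesis applied to $\sum_n C_{j,n}x^n$ (a $D$-finite series with values in a finitely generated group over a field of smaller transcendence degree) finishes the proof. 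In effect the valuation-theoretic Lemma~\ref{lem:pole} plays the role that the Weil height plays in the number-field case; if you want to avoid the paper's induction you would need to supply such a substitute for heights over $K_0$ (or a genuinely uniform multi-specialization argument), not a single generic specialization. A minor additional point: your dichotomy ``each residue class has $a_n=0$ for density one or $a_n\in G$ for density one'' should be stated as in the paper, namely that after refining, the zero set on each class is either cofinite or of Banach density zero; your subsequent use of Lemma~\ref{dichotomy} and Lemma~\ref{lem:un} to eliminate the density-zero zeros and legitimize Corollary~\ref{orbit} is fine.
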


To do this, we require a basic result on orders of zeros and poles of coefficients in a $D$-finite series. We recall that if $X$ is a smooth irreducible projective curve over an algebraically closed field $k$, and if $k(X)$ is the field of rational functions on $X$, then to each $p\in X$ we have a discrete non-archimedean valuation $\nu_p:k(X)^*\to \Z$ that gives the order of vanishing of a function at $p$ (when the function has a pole at $p$ then this valuation is negative).
Then for a function $f\in k(X)^*$ we have a divisor ${\rm div}(f) =\sum_{p\in X} \nu_p(f) [p]$, which is a formal $\Z$-linear combination of points of $X$.  The \emph{support} of ${\rm div}(f)$ is the (finite) set of points $p$ for which $\nu_p(f)\neq 0$; that is, it is the set of points where $f$ has a zero or a pole.  We make use of the fact $\sum_p \nu_p(f) = 0$ \cite[II, Corollary 6.10]{Hart}.

\begin{lemma} Let $E$ be an algebraically closed field of characteristic zero and let $K$ be the field of rational functions of
a smooth projective curve $C$ over $E$.  Suppose that $F(x)=\sum a_n x^n\in K[[x]]$ is $D$-finite, $a_n\neq 0$ for every $n$,
and that there is a finite subset $S$ of $C$ such that ${\rm div}(a_n)$ is supported on $S$ for every $n$.  Then for each $p\in S$, $\nu_p(a_n) =  {\rm O}(n)$.
\label{lem:pole}
\end{lemma}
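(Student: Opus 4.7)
The plan is to combine two ingredients: a lower bound $\nu_p(a_n) \geq -C_p n - D_p$ obtained from the $P$-recursive relation satisfied by $(a_n)$, and the degree-zero property of principal divisors on a smooth projective curve, which converts simultaneous lower bounds at every point of $S$ into a matching upper bound at each individual $p\in S$.

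Since $F$ is $D$-finite, the sequence $(a_n)$ is $P$-recursive over $K$: there exist polynomials $P_0,\ldots,P_d \in K[t]$ with $P_d \not\equiv 0$ and an integer $n_0$ such that
\[ P_d(n)\, a_{n+d} \;=\; -\sum_{i=0}^{d-1} P_i(n)\, a_{n+i} \qquad \text{for every } n\geq n_0 \text{ with } P_d(n)\neq 0. \]
Fix $p\in S$ and write $v := \nu_p$. Because $v$ is trivial on $E\supseteq \Q$, one has $v(n)=0$ for every integer $n$, and hence for any nonzero $f \in K[t]$ with coefficients $c_0,\ldots,c_r \in K$, the ultrametric inequality gives $v(f(n))\geq \min_j v(c_j)$, with equality for all but finitely many integers $n$. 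Applying this to $P_0,\ldots,P_d$ together with the ultrametric inequality on the recurrence yields a constant $M$ (depending only on $p$ and the recurrence) and a finite set $B_p \subset \N_0$ such that
\[ v(a_{n+d}) \;\geq\; \min_{0\leq i\leq d-1} v(a_{n+i}) \;-\; M \qquad \text{for every } n\geq n_0,\ n\notin B_p, \]
while for $n \in B_p$ the same estimate holds with $M$ replaced by a larger but finite constant $M_n$. A telescoping argument on the running minimum $h(n) := \min_{n-d+1\leq k\leq n} v(a_k)$, noting that $\sum_{n\in B_p}(M_n-M)<\infty$, then produces constants $C_p,\, D_p > 0$ with
\[ \nu_p(a_n) \;\geq\; -C_p\, n \;-\; D_p \qquad \text{for all } n\geq 0. \]

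For the matching upper bound, apply the above argument at every $q \in S$ to obtain $\nu_q(a_n) \geq -C_q n - D_q$. Since each $a_n$ is a nonzero element of $K$, its divisor has degree zero; and since $\operatorname{div}(a_n)$ is supported on $S$, we have $\sum_{q\in S}\nu_q(a_n)=0$. Hence
\[ \nu_p(a_n) \;=\; -\sum_{q\in S\setminus\{p\}} \nu_q(a_n) \;\leq\; \sum_{q\in S\setminus\{p\}}(C_q n + D_q) \;=\; O(n), \]
which combined with the lower bound yields $|\nu_p(a_n)| = O(n)$.

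The main technical point is controlling the finitely many "bad" indices $n$ at which some $P_i$ or $P_d$ acquires extra $p$-adic vanishing beyond the generic value $\min_j v(c_j)$; since such indices reduce to the roots of the reductions $\overline{P_i / \pi^{\min_j v(c_j)}}$ modulo a uniformizer $\pi$ at $p$, the set $B_p$ is finite, so its cumulative contribution is bounded and is absorbed into the additive constant $D_p$ without affecting the linear rate.
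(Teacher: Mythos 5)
Your proposal is correct and follows essentially the same route as the paper's proof: you bound $\nu_p$ of the polynomial coefficients of the $P$-recurrence (eventually constant, with finitely many exceptional $n$ absorbed into the additive constant), use the ultrametric inequality to get the linear lower bound $\nu_p(a_n)\ge -C_p n - D_p$ at every point of $S$, and then invoke $\sum_{q\in S}\nu_q(a_n)=0$ for the principal divisor of $a_n$ to convert these into the matching upper bound. The only cosmetic difference is that you track the finitely many ``bad'' indices explicitly via the set $B_p$, whereas the paper simply restricts to $n$ sufficiently large; both yield $\nu_p(a_n)={\rm O}(n)$.
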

\begin{proof} We have a polynomial recurrence $$R_M(n) a_{n+M} +\cdots + R_0(n) a_n =0$$ for $n$ sufficiently large.
Since each $R_i(n) = \sum_{j=0}^L r_{i,j} n^j$, we claim there is a fixed number $C_i$ such that $\nu_p(R_i(n))=C_i$ for sufficiently large $n$ for each nonzero polynomial $R_i$.
To see this, pick a uniformizing parameter $u$ for the local ring $\mathcal{O}_{X,p}$ and suppose that $Q(x)=q_0+\cdots + q_L x^L$ is a nonzero polynomial in $K[x]$.  Then we can rewrite it as
$\sum_{i=0}^N u^{m_i} q_i' x^L$ where $m_i=\nu_p(q_i)$ and $q_i'\in \mathcal{O}_{X,p}^*$.  Let $s$ denote the minimum of $m_0,\ldots, m_N$.  Then $Q(x)/u^s = \sum_{i=0}^N u^{m_i-s} q_i' x^L$ and by construction
$$\sum_{i=0}^N u^{m_i-s}(p) q_i'(p) x^L$$ is a nonzero polynomial in $E[x]$ and hence it is nonzero for $n$ sufficiently
large, which shows that $\nu_p(Q(n))=s$ for all $n$ sufficiently large.  Thus
in particular if $C$ is the maximum of the $C_i$ as $i$ ranges over the indices for which $R_i(x)$ is nonzero, then for $n$ sufficiently large
\begin{align*}
\nu_p(a_{n+M}) & =
\nu_p(R_M(n) a_{n+M}) - C\\
 & = \nu_p(\sum_{i=0}^{M-1} R_i(n) a_{n+i}) - C\\
&\ge  -2C + \min(\nu_p(a_{n+i}\colon i=0,\ldots ,M-1).
\end{align*}
It follows that $\nu_p(a_n) \ge -2C n + B$ for some constant $B$ for all sufficiently large $n$.  It follows that there is a fixed constant $C_0$ such that
$\nu_p(a_n) \ge -C_0 n$ for every $p\in S$, for all $n$ sufficiently large.  To get an upper bound, observe that
$\sum_{p\in S} \nu_p(a_n) = 0$ \cite[II, Corollary 6.10]{Hart} and so for $n$ sufficiently large we have
$$\nu_p(a_n) = \sum_{q\in S\setminus \{p\}} -\nu_q(a_n) \le (|S|-1)C_0n,$$ which now gives $\nu_p(a_n)={\rm O}(n)$.

\end{proof}
We now give a quick overview of how one can recover Theorem \ref{thm:B} from the above dynamical framework.
\begin{proof}[Proof of Theorem \ref{thm:B}] By Theorem \ref{thm:Dfinite}, $\{n\colon a_n\in G\}$ is a finite union of arithmetic
progressions along with a set of Banach density zero.  Hence there exists some $L$ such that for each $j\in \{0,\ldots ,L-1\}$, the set $\{n\colon a_{Ln+j}=0\}$ either contains all sufficiently large $n$ or it is a set of Banach density zero.  Since $F(x)=\sum a_n x^n$ is $D$-finite if and only if for each $L\ge 1$ and each $j\in \{0,\ldots ,L-1\}$,
the series $\sum a_{Ln+j} x^n$ is $D$-finite, by Lemma \ref{lem:un} it suffices to consider the case when $a_n\in G$ for every $n$.
The fact that the coefficients are $P$-recursive gives that there is a finitely generated field extension $K_0$ of $\mathbb{Q}$ such that $F(x)\in K_0[[x]]$.  We prove the result by induction on ${\rm trdeg}_{\Q}(K_0)$.  If $[K_0:\Q]<\infty$ then $K_0$ is a number field.  Then by \cite[Theorem 1.6]{BNZ}, $h(a_n)={\rm O}(n\log n)$ and by Equation (\ref{eq:Dfin}) and Theorem \ref{orbit2}, we then get $a_n$ satisfies a linear recurrence, giving the result when $K_0$ has transcendence degree zero over $\Q$.

We now suppose the the result holds whenever $K_0$ has transcendence degree less than $m$, with $m\ge 1$,
and consider the case when ${\rm trdeg}_{\Q}(K_0)=m$.  Then there is subfield $E$ of $K_0$ such that $K_0$
has transcendence degree $1$ over $E$ and such that $E$ is algebraically closed in $K_0$.  Since $K_0$ has
characteristic zero and $E$ is algebraically closed in $K_0$, $K_0$ is a regular extension of $E$, and so $R:=K_0\otimes_E \bar{E}$ is an integral domain.
Then the field of fractions of $R$ is the field of regular functions of a smooth projective curve $X$ over $\bar{E}$.
Now let $g_1,\ldots ,g_d, g_{d+1},\ldots ,g_m$ be generators for $G$ so that $g_1,\ldots ,g_d$ generate a free abelian group and $g_{d+1},\ldots , g_{m}$ are roots of unity
and let
$\{p_1,\ldots ,p_{\ell}\}\in X$ denote the collection of points at which some element from $g_1,\ldots ,g_d$ has a zero or a pole.
Then there are integers $b_{i,j}$ such that
$${\rm div}(g_i) = \sum b_{i,j} [p_j]$$ for $i=1,\ldots ,d$.
Now we have $ a_n = g_1^{e_1(n)}\cdots g_m^{e_m(n)}$ and so
$${\rm div}(a_n) = \sum_{j=1}^{\ell} \left( \sum_{i=1}^d b_{i,j} e_i(n)\right) [p_j].$$
In particular, by Lemma \ref{lem:pole}, $$\sum_{i=1}^d b_{i,j}  e_i(n) = {\rm O}(n)$$ and since the left-hand side satisfies a linear recurrence, we have that it is piecewise linear in the sense of having the form $A+Bn$ on progressions of a fixed gap \cite[Proposition 3.6]{BNZ}.  
It then follows that there exist some $r\ge 1$ and some fixed $h_0,\ldots ,h_{r-1}\in K_0^*$ such that for $j\in \{0,\ldots ,r-1\}$,
$ a_{r(n+1)+j}/a_{rn+j} = C_{j,n} h_j$, where $C_{j,n}\in E^*$.
It follows that for $ a_{rn+j} = C_{j,n} h_j^n P_j$, where $P_j\in K_0$ is constant.  Then since the series $\sum P_j^{-1} h_n^{-n} x^n$ is $D$-finite and since $D$-finite series are closed under Hadamard product,
$$G_j(x):=\sum C_{j,n} x^n \in \bar{E}[[x]]$$ is $D$-finite and takes values in a finitely generated multiplicative group.  Thus $G_j(x)$ is rational and then it is straightforward to show that
$$F_j(x):=\sum  a_{rn+j} x^n=\sum C_{j,n} P_j h_j^n x^n$$ must also be rational and thus $F(x)=\sum_{j=0}^{r-1} x^jF_j(x^r) $ is also rational, as required.
\end{proof}


\end{document}